\documentclass[reqno, 12pt]{amsart}
\usepackage{enumerate}
\usepackage{amssymb,url,color}
\usepackage{hyperref}
\usepackage{amsmath,amsthm}

\allowdisplaybreaks[4]

\newtheorem{thm}{Theorem}[section]
\newtheorem{cor}{Corollary}[section]

\newtheorem{lem}{Lemma}[section]
\newtheorem{rem}{Remark}[section]
\newtheorem{exa}{Example}[section]

\theoremstyle{definition}

\numberwithin{equation}{section}
\newcommand{\pp}{\mathbb{P}}
\newcommand{\nn}{\mathbb {N}}
\newcommand{\ee}{\mathbb{E}}

\newcommand{\FF}{\mathcal{F}}
\newcommand{\dd}{\mathfrak{D}}
\newcommand{\rr}{\mathbb{R}}

\newcommand{\ii}{\mathbb{I}}

\def\beq{\begin{equation}}
\def\deq{\end{equation}}
\allowdisplaybreaks %%%% 锟斤拷锟叫癸拷式锟皆讹拷锟斤拷页

\bfseries\rmfamily %%锟接达拷

\begin{document}
\title[ P\'olya-Friedman Mixed Urn Model ]
{Central Limit Theorem for a  P\'olya-Friedman Mixed Urn Model}
\thanks{This work is supported by National Natural Science Foundation of China (NSFC-11971154)
and Natural Science Foundation of Henan (No. 262300421307).}

\author[J. N. Shi]{Jianan Shi}
\address[J. N. Shi]{School of Mathematics and Statistics, Henan Normal University, Henan Province, 453007, China.}
\email{\href{mailto: J. N. Shi
<jiananshi2022@126.com>}{jiananshi2022@126.com}}

\author[Q. Yin]{Qing Yin}
\address[Q. Yin] {School of Mathematics and Statistics, Henan Normal University, Henan Province, 453007, China.}
\email{\href{mailto: Q. Yin
<qingyin1282@163.com>}{qingyin1282@163.com}}

\author[Y. Miao]{Yu Miao}
\address[Y. Miao]{School of Mathematics and Statistics, Henan Normal University, Henan Province, 453007, China.} \email{\href{mailto: Y. Miao
<yumiao728@gmail.com>}{yumiao728@gmail.com}; \href{mailto: Y. Miao <yumiao728@126.com>}{yumiao728@126.com}}

\begin{abstract}
This paper considers a two-color, single-draw urn model with two types of balls, denoted type $1$ and type $2$, with initial counts $Y^1_0\in \nn^+$ and $Y^2_0\in \nn^+$, respectively.  At each discrete time step, a ball is drawn uniformly at random, its type observed, and then it is returned to the urn. The urn is subsequently updated according to a mixed replacement matrix: with fixed probability $p\in(0,1)$, the Friedman replacement matrix is applied,
adding $a$ balls of the drawn type and $b$ balls of the opposite type;  with fixed probability
$1-p\in (0,1)$, the P\'olya replacement matrix is applied, adding $c$ balls of the drawn type. We  establish the central limit theorem for the proportion of type $1$ balls after $n$ draws. Furthermore,  we provide corollaries that yield large deviation inequalities and the law of the iterated logarithm related to the proportion of type $1$ balls after $n$ draws.
\end{abstract}

\keywords{ P\'olya-Friedman Mixed Urn Model; central limit theorems; limit theorems.}
\subjclass[2020]{60F05}

\maketitle
\section{Introduction}
Urn models have a rich history in probability theory,
with Johnson and Kotz's seminal work ``Urn Models and Their Applications"  \cite{N-S} serving as a major milestone, spurring extensive research by probabilists, statisticians, and applied scientists.
Two decades later,  Kotz and Balakrishnan \cite{S-B} published a survey paper: ``Advances in Urn Models During the Past Two Decades", which provided a thorough overview of the various types of urn models and their key characteristics.
One of the most well-known is the P\'olya urn model, introduced by Eggenberger and P\'olya \cite{E-P}. In this model, an urn initially contains
$W_0$ white balls and $B_0$ black balls.
At each discrete time step, a ball is randomly drawn from the urn, its color is recorded, and it is returned to the urn along with $c$ additional balls of the same color.
This process can be described by the $2\times2$ replacement matrix
$$
H:= \begin{pmatrix}
c & 0\\
0 & c
\end{pmatrix}
, \ \ c>0.
$$
Various generalizations of the P\'olya urn have been proposed. For instance,
Friedman \cite{F-B} considered a urn model in which $a$ balls of the same
color and $b$ balls of the opposite color are added at each step, corresponding to the symmetric replacement
matrix
$$
H:= \begin{pmatrix}
a & b\\
b & a
\end{pmatrix}
,
\ \ a, b \ge 0.
$$
Bagchi and Pal \cite{B-P} further relaxed the perfect symmetry in Friedman's urn model, considering a more general case
\beq\label{urn}
H:= \begin{pmatrix}
a & c\\
b & d
\end{pmatrix}
, \ \ a+b=c+d>0.
\deq
In this paper,  we investigate the central limit theorem for the P\'olya-Friedman mixed urn model.  The following sections will introduce the necessary definitions and developments related to urn models.

Urn models  can be roughly categorized  by the total number of balls added, the number of colors, and the number of balls drawn at each step.
An urn model is considered balanced if the total number of balls added  remains constant, regardless of the observed color, and unbalanced if it varies with color.
Urn models are further classified as  two-color or multi-color, and as single-draw or multi-draw schemes.  Gouet \cite{G-89, G-93} studied the two-color balanced urn model with single draws.
Chen and Wei \cite{C-W05} considered a two-color balanced urn model with multi-draws, where $m$ balls ($k$ black and $m-k$ white) are drawn, and returned with $c_1k$ black and $c_1(m-k)$ white balls. They showed that the proportion of black balls converges almost surely to a random variable with an absolutely continuous distribution.
Chen and Kuba \cite{C-K13} further developed this model,  providing exact formulas for the expectation and variance of white balls after $n$ draws and describing the structure of higher moments.
Dasgupta and  Maulik \cite{D-M}  considered a multi-color balanced urn model with single draws,  obtaining the rates of the counts of balls corresponding to each color for the strong laws to hold.
Idriss \cite{IS}  studied a two-color unbalanced urn model with single draws, obtained both a central limit theorem and a strong law of large numbers for the proportion of white balls at time $n$.
Shi et al. \cite{S-M,S-M-1} provided upper bounds for large deviation probabilities and the law of the iterated logarithm for these urn models.
For further results on urn models, see \cite{B-T,B-T-22, B-D-M, H-L-S, H, M-13}, among others.

In recent years, there has been an increasing amount of research on the urn model with a random replacement matrix.
Aguech et al. \cite{A-1, A-2, A-3} investigated a two-color unbalanced urn model with multi-draws and a random addition matrix,  analyzing the asymptotic behavior of the urn after $n$ draws. Crimaldi et al. \cite{C-L-M} studied a two-color unbalanced urn model with random multiple drawings and random  time-dependent addition matrix.  They proved almost sure convergence results for the proportion of balls of a given color.
Crimaldi et al. \cite{C-L-M-23}  complete the study of the model introduced in Crimaldi et al. \cite{C-L-M},  identifying  the exact rates at which the number of balls of each color grows to $+\infty$, and defined two strongly consistent estimators for the limiting reinforcement averages.

Building on these previous studies, this paper investigates a two-color, single-draw urn model with a random replacement matrix, refer to here as the mixed urn model.
In this model, upon drawing a ball and  observing its color, the system randomly selects either Friedman replacement matrix with fixed probability $p\in(0,1)$ or the P\'olya replacement matrix with fixed probability $1-p\in (0,1)$.
Alves and  Rosales \cite{A-R}  studied this mixed urn model and  showed that the proportion of balls of one color converges almost surely to $\frac{1}{2}$.
Extending their work, we focus on establishing the central limit theorem for the proportion of balls of a certain color after $n$ draws. Additionally, as a corollary, we derive large deviation inequalities and the law of the iterated logarithm pertaining to the proportion of balls of that color after $n$ draws.
The structure of this paper is as follows:
In Section 2, we present the two-color single-draw mixed urn model,  while the main results are discussed in Section 3.
Section 4 provides preliminary lemmas and proofs of the main results.

\section{A  Friedman-P\'olya mixed urn model}
We consider an urn containing two types of balls: type $1$ and type $2$. All random variables pertinent to this model are defined on a probability space $(\Omega,\mathcal{A}, \pp)$.
Let $(Y_n)=(Y^1_n,Y^2_n)^{'} \in \nn^2 \backslash \{0\},$ denote the composition of the urn at stage $n$, where $T_n:=Y^1_n+Y^2_n$ represents the total number of balls at stage $n$.
Initially, the urn contains $Y^1_0\in \nn^+$ balls of type $1$ and $Y^2_0\in \nn^+$ balls of type $2$. Let $T_0:=Y^1_0+Y^2_0$ denote the initial total number of balls.
Let $(\FF_n)_{n\ge 0}$ be the $\sigma$-algebra generated by the first $n$ steps.
At each step, a ball is drawn uniformly at random, its type observed, and then it is returned to the urn. The urn is subsequently updated according to a mixed replacement matrix: with fixed probability $p\in(0,1)$, the Friedman replacement matrix is applied,
adding $a$ balls of the drawn type and $b$ balls of the opposite type;  with fixed probability
$1-p \in(0,1)$, the P\'olya replacement matrix is applied, adding $c$ balls of the drawn type. In other words, the replacement matrix $H$ is chosen as follows:
$$
H:=
\begin{pmatrix}
a & b\\
b & a
\end{pmatrix}, \ \ \ a, b\ge 0
$$
with probability $p$, and
$$
H:=
\begin{pmatrix}
c & 0\\
0 & c
\end{pmatrix},\ \ \ c>0
$$
with probability $1-p$.

Let $Z_n:=\frac{Y^1_n}{T_n}$ represent the proportion of balls of type $1$ in the urn. In order to obtain our main results, we first need to establish the recursive formula for $Z_n$.
Let $(\xi_n)_{n\ge 1}$ be a sequence of independent and identically distributed random variables, independent of $\FF_n$, defined such that $$
\pp(\xi_n=1)=p,\ \ \ \pp(\xi_n=0)=1-p,\ \ \ p\in(0,1).
$$
Here, $\xi_n$ indicates the replacement rule applied at step $n$: $\xi_n=1$ means the Friedman replacement matrix is used, while $\xi_n=0$ means the P\'olya replacement matrix is used.
Let $(\eta_n)_{n\ge 1}$ be another sequence of random variables such that
$$
\pp(\eta_n=1|\FF_{n-1})=Z_{n-1},\ \ \ \pp(\eta_n=0|\FF_{n-1})=1-Z_{n-1}.
$$
Here, $\eta_n$ indicates the type of ball drawn at step $n$: $\eta_n=1$ corresponds to drawing a type $1$ ball, and $\eta_n=0$ corresponds to drawing a type $2$ ball. The conditional probability of drawing a type $1$ ball is exactly the current proportion $Z_{n-1}$.
In this case, we have
\beq\label{Y}
\aligned
Y^1_{n+1}&=Y^1_n+a \ii_{\{\xi_{n+1}=1,\eta_{n+1}=1\}}+b\ii_{\{\xi_{n+1}=1,\eta_{n+1}=0\}}+c \ii_{\{\xi_{n+1}=0,\eta_{n+1}=1\}}\\
&=Y^1_0+a\sum_{k=1}^{n+1} \ii_{\{\xi_k=1,\eta_k=1\}}+b\sum_{k=1}^{n+1} \ii_{\{\xi_k=1,\eta_k=0\}}+c\sum_{k=1}^{n+1} \ii_{\{\xi_k=0,\eta_k=1\}}
\endaligned
\deq
and
\beq\label{T}
\aligned
T_{n+1}&=T_n+(a+b)\ii_{\{\xi_{n+1}=1\}}+c\ii_{\{\xi_{n+1}=0\}}\\
&=T_0+\sum_{k=1}^{n+1} \left[(a+b)\ii_{\{\xi_k=1\}}+c\ii_{\{\xi_k=0\}}\right].
\endaligned
\deq
Define
\beq\label{Y1}
\Delta Y^1_{n+1}:=a \ii_{\{\xi_{n+1}=1,\eta_{n+1}=1\}}+b\ii_{\{\xi_{n+1}=1,\eta_{n+1}=0\}}+c \ii_{\{\xi_{n+1}=0,\eta_{n+1}=1\}}
\deq
and
\beq\label{T1}
\Delta T_{n+1}:=(a+b)\ii_{\{\xi_{n+1}=1\}}+c\ii_{\{\xi_{n+1}=0\}}.
\deq
Then for $Z_n:=\frac{Y^1_n}{T_n}$ with $Z_0\in(0,1)$, we have
\beq\label{Z}
\aligned
Z_{n+1}-Z_n&=\frac{Y_{n+1}^1}{T_{n+1}}-\frac{Y_{n}^1}{T_{n}}
=\frac{Y_{n}^1+\Delta Y_{n+1}^1}{T_n+\Delta T_{n+1}}-\frac{Y_{n}^1}{T_{n}}\\
&=\frac{T_n\left(Y_{n}^1+\Delta Y_{n+1}^1\right)-Y_{n}^1\left(T_n+\Delta T_{n+1}\right)}{T_n\left(T_n+\Delta T_{n+1}\right)}\\
&=\frac{T_n\Delta Y_{n+1}^1-Y_{n}^1\Delta T_{n+1}}{T_n\left(T_n+\Delta T_{n+1}\right)}\\
&=\frac{\Delta Y_{n+1}^1}{T_{n+1}}-\frac{Y_{n}^1\Delta T_{n+1}}{T_n T_{n+1}}\\
&=\frac{\Delta Y_{n+1}^1}{T_{n+1}}-\frac{\Delta T_{n+1}}{ T_{n+1}}Z_n.
\endaligned
\deq
Note that $(\xi_n)_{n\ge 1}$ be a sequence of random variables, independent of $\FF_n$, from (\ref{Y1}) we can get that
$$
\aligned
\ee\left[\Delta Y_{n+1}^1|\FF_n\right]&=\ee\left[a \ii_{\{\xi_{n+1}=1,\eta_{n+1}=1\}}+b\ii_{\{\xi_{n+1}=1,\eta_{n+1}=0\}}+c \ii_{\{\xi_{n+1}=0,\eta_{n+1}=1\}}|\FF_n\right]\\
&=a\pp\left(\xi_{n+1}=1,\eta_{n+1}=1|\FF_n\right)+b\pp\left(\xi_{n+1}=1,\eta_{n+1}=0|\FF_n\right)\\
&\ \ \ \ \ \ \ \ \ \ \ \ \  +c\pp\left(\xi_{n+1}=0,\eta_{n+1}=1|\FF_n\right)\\
&=a p Z_{n}+b p (1-Z_{n})+c(1-p)Z_{n},
\endaligned
$$
and from (\ref{T1}) we can get that
$$
\aligned
\ee\left[\Delta T_{n+1}Z_n|\FF_n\right]&=Z_n\ee\left[(a+b)\ii_{\{\xi_{n+1}=1\}}+c\ii_{\{\xi_{n+1}=0\}}|\FF_n\right]\\
&=Z_n(a+b)\pp\left(\xi_{n+1}=1|\FF_n\right)+Z_nc\pp\left(\xi_{n+1}=0|\FF_n\right)\\
&=(a+b)pZ_n+c(1-p)Z_n.
\endaligned
$$
Hence we can get that
\beq\label{M1}
\ee\left[(\Delta Y_{n+1}^1-\Delta T_{n+1}Z_n)\Big|\FF_n\right]=-2b pZ_n+b p.
\deq
Define
\beq\label{f}
f(x):=-2b px+b p
\deq
and
\beq\label{M}
\aligned
\Delta M_{n+1}:&=\Delta Y_{n+1}^1-\Delta T_{n+1}Z_n-\ee\left[\left(\Delta Y_{n+1}^1-\Delta T_{n+1}Z_n\right)\Big|\FF_n\right]\\
&=\Delta Y_{n+1}^1-\Delta T_{n+1}Z_n-f(Z_n),
\endaligned
\deq
from (\ref{Z}) we know that
\beq\label{Z-1}
Z_{n+1}-Z_n=\frac{1}{T_{n+1}}\left(f(Z_n)+\Delta M_{n+1}\right).
\deq

\section{Main results}
In this section, we present the central limit theorem for the proportion $(Z_n)_{n\ge0}$ of balls of type $1$ in the two-color, P\'olya-Friedman mixed urn model with single draws.

\begin{thm}\label{thm1}
Let $Y^1_0$ and $Y^2_0$ be non-negative integers such that $T_0:=Y^1_0+Y^2_0>0$. For any positive integers $a,b,c$ and any $p\in (0,1)$,
we can establish the following properties for the total number of balls $T_n$ after $n$ draws,  along with the function $f(x)$ and the increment $\Delta M_{n+1}$ as defined in  (\ref{f}) and (\ref{M}):
\begin{enumerate}[$(i)$]
 \item For any $n\ge 1$, we have
  \beq\label{T2}
\frac{1}{T_0+n\max\{a+b,c\}}\le\frac{1}{T_n}\le \frac{1}{n\min\{a+b,c\}}.
\deq

\item For any $x\in [0,1]$, we have
\beq
\left|f(x)\right|\le b.
\deq

\item
For any $n\ge 0$, we have
\beq
|\Delta M_{n+1}|\le 2a+3b+2c.
\deq

 \item
 For any $n\ge 1$, we have
\beq\label{V-2}
\left|\ee\left[\frac{\Delta M_{n+1}}{T_{n+1}}\Big|\FF_n\right]\right|\le \frac{K}{n^2}.
\deq
\end{enumerate}
Here, $K$  is a positive constant.
\end{thm}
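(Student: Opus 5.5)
Parts (i)–(iii) are direct consequences of the explicit recursions (\ref{T}), (\ref{f}) and (\ref{M}); part (iv) is the only one that needs a real argument.

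\textbf{Parts (i)–(iii).} For (i) we use (\ref{T}). Each increment $\Delta T_k$ equals either $a+b$ or $c$, so
\[
\min\{a+b,c\}\le \Delta T_k\le \max\{a+b,c\}.
\]
Summing over $k\le n$ gives
\[
n\min\{a+b,c\}\le T_n\le T_0+n\max\{a+b,c\},
\]
where the left inequality uses $T_0\ge 0$. Taking reciprocals yields (\ref{T2}).

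For (ii), $f(x)=bp(1-2x)$ is linear, so on $[0,1]$ we have $|f(x)|\le bp\le b$.

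For (iii), write $\Delta M_{n+1}=\Delta Y^1_{n+1}-\Delta T_{n+1}Z_n-f(Z_n)$ and bound each term crudely. Since $0\le \Delta Y^1_{n+1}\le a+b+c$, $0\le \Delta T_{n+1}Z_n\le a+b+c$ (because $Z_n\in[0,1]$), and $|f(Z_n)|\le b$, the triangle inequality gives $|\Delta M_{n+1}|\le 2a+3b+2c$.

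\textbf{Part (iv).} The difficulty is that $T_{n+1}$ is not $\FF_n$-measurable: it depends on $\xi_{n+1}$, and $\Delta M_{n+1}$ is correlated with $\xi_{n+1}$. So we cannot simply factor $1/T_{n+1}$ out of the conditional expectation. Instead we compare with the $\FF_n$-measurable quantity $1/T_n$. Since $\ee[\Delta M_{n+1}\mid\FF_n]=0$,
\[
\ee\Big[\frac{\Delta M_{n+1}}{T_{n+1}}\Big|\FF_n\Big]
=\ee\Big[\Delta M_{n+1}\Big(\frac{1}{T_{n+1}}-\frac{1}{T_n}\Big)\Big|\FF_n\Big].
\]
Next,
\[
\Big|\frac{1}{T_{n+1}}-\frac{1}{T_n}\Big|=\frac{\Delta T_{n+1}}{T_nT_{n+1}}\le \frac{\max\{a+b,c\}}{T_nT_{n+1}}.
\]
By (i), $T_n\ge n\min\{a+b,c\}$ and $T_{n+1}\ge (n+1)\min\{a+b,c\}\ge n\min\{a+b,c\}$. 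Combining these with (iii), we obtain
\[
\Big|\ee\Big[\frac{\Delta M_{n+1}}{T_{n+1}}\Big|\FF_n\Big]\Big|\le \frac{(2a+3b+2c)\max\{a+b,c\}}{(\min\{a+b,c\})^2\,n^2},
\]
so we may take $K=(2a+3b+2c)\max\{a+b,c\}/(\min\{a+b,c\})^2$.

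The only delicate point is this centering trick, which makes the conditional mean of order $n^{-2}$ rather than $n^{-1}$. We also need $T_n>0$ for $n\ge1$, which holds because $\min\{a+b,c\}>0$ for positive integers $a,b,c$.
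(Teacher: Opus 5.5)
Your proof is correct. Parts (i)--(iii) follow the paper's argument. Part (iv) takes a genuinely different route.

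\textbf{How the paper proves (iv).} The paper does not center. It splits $\Delta M_{n+1}/T_{n+1}$ on the events $\{\xi_{n+1}=1\}$ and $\{\xi_{n+1}=0\}$. On each event $T_{n+1}$ equals the $\FF_n$-measurable quantity $T_n+a+b$ or $T_n+c$. Using the conditional independence of $\xi_{n+1}$ and $\eta_{n+1}$ given $\FF_n$, it computes the exact identity
\[
\ee\Big[\frac{\Delta M_{n+1}}{T_{n+1}}\Big|\FF_n\Big]=bp(1-p)(1-2Z_n)\,\frac{c-a-b}{(T_n+a+b)(T_n+c)},
\]
and then (i) gives the $K/n^2$ bound.

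\textbf{What your route buys.} You subtract the term $\Delta M_{n+1}/T_n$, which has conditional mean zero, and use $|1/T_{n+1}-1/T_n|\le \max\{a+b,c\}/(T_nT_{n+1})$. This is shorter and purely structural. It only uses that $\Delta M_{n+1}$ is a bounded martingale difference and that the increments of $T_n$ lie in $[\min\{a+b,c\},\max\{a+b,c\}]$. So it would carry over unchanged to other urn schemes with bounded random replacement. It also gives an explicit $K$.

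\textbf{What the paper's route buys.} The exact formula carries more information.
\begin{enumerate}
\item It shows the conditional mean vanishes identically in the balanced case $c=a+b$.
\item It gives the smaller constant $bp(1-p)|c-a-b|/(\min\{a+b,c\})^2$.
\item It contains the factor $1-2Z_n$, which tends to $0$ a.s. by Remark~\ref{rem-1}. Hence the term is in fact $o(n^{-2})$ a.s.
\end{enumerate}
The only later use of (iv), in Step 2 of the proof of Theorem~\ref{thm2-1}, needs just the $O(n^{-2})$ bound, so your version suffices there.
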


\begin{cor}
 Based on  Theorem \ref{thm1}, we can conclude that the sequence $(Z_n)_{n\ge 0}$, as defined in (\ref{Z-1}), meets the criteria specified in Theorem 4.1 of Shi et al. \cite{S-M}. Consequently, we can derive large deviation inequalities for $(Z_n)_{n\ge 0}$.  Specifically,  for any $\varepsilon > 0$, there exists a positive constant $a$, which is independent of $n$, such that for all sufficiently large $n$, we have
$$
\pp\left(\left|Z_{n+1}-\frac{1}{2}\right|>\varepsilon\right)\le 2 e^{-a n}.
$$
\end{cor}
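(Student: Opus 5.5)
The statement to prove is the Corollary: the large deviation bound $\pp(|Z_{n+1}-\frac12|>\varepsilon)\le 2e^{-an}$. The plan is to verify that the recursion (\ref{Z-1}) fits the abstract framework of Theorem 4.1 of Shi et al. \cite{S-M}, and then read off the conclusion. That framework concerns stochastic approximation recursions of the form $Z_{n+1}-Z_n=\frac{1}{T_{n+1}}(f(Z_n)+\Delta M_{n+1})$. It asks for four things:
\begin{enumerate}[$(a)$]
\item the step sizes $1/T_n$ are comparable to $1/n$ from both sides;
\item $f$ is bounded (and, implicitly, Lipschitz) on $[0,1]$;
\item the martingale-type increments $\Delta M_{n+1}$ are uniformly bounded;
\item the correlation between the random step $1/T_{n+1}$ and the noise is small, in the sense of (\ref{V-2}).
\end{enumerate}
Items $(i)$--$(iv)$ of Theorem \ref{thm1} supply exactly these. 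So the first step is simply to match each hypothesis of \cite[Theorem 4.1]{S-M} with the corresponding item.

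The second step is to identify the limit point. The function $f(x)=-2bpx+bp=bp(1-2x)$ is linear and strictly decreasing, because $b,p>0$. Its unique zero in $[0,1]$ is $x^*=\frac12$. It also satisfies $(x-\frac12)f(x)=-2bp(x-\frac12)^2<0$ for $x\neq\frac12$. This gives the stability (attractivity) condition required in \cite{S-M} for the zero of $f$, and it is consistent with the almost sure convergence $Z_n\to\frac12$ proved by Alves and Rosales \cite{A-R}. I also need $Z_n\in[0,1]$ for all $n$, which is immediate since $Z_n=Y^1_n/T_n$. Hence the bounds in $(ii)$ are applied only on $[0,1]$.

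With these checks done, Theorem 4.1 of \cite{S-M} yields, for each $\varepsilon>0$, a constant $a>0$ independent of $n$ such that $\pp(|Z_{n+1}-\frac12|>\varepsilon)\le 2e^{-an}$ for all large $n$. If that theorem is stated in terms of a generic zero $x^*$ of $f$, I substitute $x^*=\frac12$. The constant $a$ depends on $\varepsilon$, $b$, $p$, $\max\{a+b,c\}$, $\min\{a+b,c\}$, the bound $2a+3b+2c$ and $K$. Note that this constant $a$ is unrelated to the replacement parameter $a$; I would remark on the notational clash.

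The main obstacle is making sure the hypotheses of \cite{S-M} are met in precisely the form required. In particular, \cite{S-M} may treat $\Delta M_{n+1}/T_{n+1}$ as a martingale difference only up to the error controlled by $(iv)$. If so, I need the sum of these errors, $\sum_{k\ge n}K/k^2=O(1/n)$, to be absorbed into $\varepsilon/2$ for large $n$. This is where the requirement ``$n$ sufficiently large'' enters.

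Internally, the argument in \cite{S-M} presumably works as follows. Write $Z_{n+1}-\frac12=\prod_k(1-\frac{2bp}{T_{k+1}})(Z_0-\frac12)+\sum_k(\cdot)\frac{\Delta M_{k+1}}{T_{k+1}}$. The deterministic-type product decays like $n^{-2bp/\max\{a+b,c\}}$ by $(i)$. The centered martingale part, after subtracting the conditional means bounded by $(iv)$, has increments bounded by $C/k$ by $(i)$ and $(iii)$, so Azuma--Hoeffding gives an exponential tail. If the cited theorem's hypotheses turn out to be phrased differently (for example, requiring a Lipschitz $f$, which holds here with constant $2bp$), I would verify those directly from the explicit formulas (\ref{f}) and (\ref{T2}).
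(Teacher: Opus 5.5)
Your proposal takes essentially the same route as the paper. The paper gives no separate proof: it only observes that items $(i)$--$(iv)$ of Theorem \ref{thm1} are the hypotheses of Theorem 4.1 in \cite{S-M}, with limit point $\frac12$, the unique zero of $f(x)=bp(1-2x)$ (consistent with Remark \ref{rem-1}). Your extra check that this zero is attracting is a reasonable addition.

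Your Azuma--Hoeffding sketch of the internals is not needed. If you ever write it out, note that the unrolled weights $\prod_{j>k}(1-2bp/T_{j+1})$ depend on future $\xi_j$'s and so are not predictable. A martingale inequality therefore cannot be applied to that sum as written; one fix is to condition on the whole $\xi$-sequence and re-centre the increments.
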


\begin{thm}\label{thm2}
Let $Y^1_0$ and $Y^2_0$ be non-negative integers such that $T_0:=Y^1_0+Y^2_0>0$. For any positive integers $a,b,c$ and any $p\in (0,1)$,
the total number of balls  after $n$ draws, denoted as $T_n$, satisfies
$$
\lim_{n\rightarrow\infty} \frac{T_n}{n}= (a+b)p+c(1-p).
$$
Moreover, we have the convergence for the conditional expectation:
$$
\ee\left[\left(\frac{n+1}{T_{n+1}} \Delta M_{n+1}\right)^2\Bigg|\FF_{n} \right]\rightarrow \frac{p(a-b)^2+c^2(1-p)}{4\big((a+b)p+c(1-p)\big)^2}\ \ \ \ a.s.,
$$
where $\Delta M_{n+1}$ is defined in (\ref{M}).
\end{thm}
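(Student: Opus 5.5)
The plan is to prove the two assertions of Theorem \ref{thm2} separately: the first from the strong law of large numbers, the second by reducing the conditional second moment to a continuous function of $Z_n$ and then using $Z_n\to\frac12$ a.s.

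For the first assertion, write $\mu:=(a+b)p+c(1-p)$. By (\ref{T}), $T_n=T_0+\sum_{k=1}^n\Delta T_k$. Since the $\xi_k$ are i.i.d., the increments $\Delta T_k=(a+b)\ii_{\{\xi_k=1\}}+c\ii_{\{\xi_k=0\}}$ are i.i.d. and bounded, with mean $\mu$. The strong law of large numbers then gives $T_n/n\to\mu$ a.s. directly.

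For the second assertion, the difficulty is that $T_{n+1}$ is not $\FF_n$-measurable, because it involves $\xi_{n+1}$. I will therefore replace $T_{n+1}$ by $T_n$ at a controlled cost. Since $0\le \Delta T_{n+1}\le \max\{a+b,c\}$ and $T_n\ge n\min\{a+b,c\}$ by Theorem \ref{thm1}(i), we have
$$\left|\frac{n+1}{T_{n+1}}-\frac{n+1}{T_n}\right|\le \frac{(n+1)\Delta T_{n+1}}{T_nT_{n+1}}=O(1/n)$$
deterministically. Both ratios are also bounded by a constant. Combining this with $|\Delta M_{n+1}|\le 2a+3b+2c$ from Theorem \ref{thm1}(iii), we get
$$\ee\left[\left(\frac{n+1}{T_{n+1}}\Delta M_{n+1}\right)^2\Big|\FF_n\right]=\left(\frac{n+1}{T_n}\right)^2\ee\left[(\Delta M_{n+1})^2\big|\FF_n\right]+O(1/n).$$
By the first part, $((n+1)/T_n)^2\to 1/\mu^2$ a.s.

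It remains to find the limit of $\ee[(\Delta M_{n+1})^2|\FF_n]$. Since $f(Z_n)$ is the conditional mean of $\Delta Y^1_{n+1}-\Delta T_{n+1}Z_n$, we have
$$\ee[(\Delta M_{n+1})^2|\FF_n]=g(Z_n),\qquad g(z):=p z(a-(a+b)z)^2+p(1-z)(b-(a+b)z)^2+(1-p)z(c-cz)^2+(1-p)(1-z)(cz)^2-f(z)^2 .$$
This holds because $\xi_{n+1}$ is independent of $\FF_n$ and $\eta_{n+1}$, and $\eta_{n+1}$ is conditionally Bernoulli$(Z_n)$. The function $g$ is a polynomial, hence continuous.

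By Alves and Rosales \cite{A-R}, $Z_n\to\frac12$ a.s. Alternatively, one can obtain this from (\ref{Z-1}) and Theorem \ref{thm1} via the stochastic approximation argument of \cite{S-M}, since $f$ has the unique stable zero $\frac12$. Hence $g(Z_n)\to g(\frac12)$ a.s. At $z=\frac12$ we have $f(\frac12)=0$. The quantity $\Delta Y^1-\frac12\Delta T$ then takes the values $\pm\frac{a-b}{2}$ with total probability $p$ and $\pm\frac c2$ with total probability $1-p$. This gives
$$g\left(\tfrac12\right)=\frac{p(a-b)^2+(1-p)c^2}{4}.$$
Dividing by $\mu^2$ yields the stated limit.

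The main obstacle is the non-measurability of $T_{n+1}$, handled by the $O(1/n)$ swap above, together with justifying $Z_n\to\frac12$, for which I rely on \cite{A-R}. The rest is bookkeeping.
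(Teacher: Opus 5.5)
Your proposal is correct and follows the same route as the paper: the strong law of large numbers for $T_n/n$, then $\ee[(\Delta M_{n+1})^2|\FF_n]$ written as a polynomial in $Z_n$ and evaluated at the Alves--Rosales limit $Z_n\to\frac12$, then division by $\big((a+b)p+c(1-p)\big)^2$. Your deterministic $O(1/n)$ replacement of $T_{n+1}$ by the $\FF_n$-measurable $T_n$ is a worthwhile addition, since the paper simply invokes (\ref{T3}) at that point without addressing that $T_{n+1}$ depends on $\xi_{n+1}$.
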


\begin{cor}
 Building on Theorem \ref{thm2}, we establish that the sequence $(Z_n)_{n\ge 0}$, defined in (\ref{Z-1}), fulfills the criteria outlined in  Theorem 3.1 of Shi et al. \cite{S-M-1}. As a result, we can derive the law of the iterated logarithm for $(Z_n)_{n\ge 0}$. Under the condition that $\frac{-b p}{(a+b)p+c(1-p)}<-\frac{1}{2}$, the following limit holds
$$
\limsup_{n\rightarrow \infty} \left(\frac{n}{2 \log \log n}\right)^{1/2} \left(Z_{n+1}-\frac{1}{2}\right)=\frac{\sigma}{(2 \alpha +1)^{\frac{1}{2}}},
$$
where $\alpha:=\frac{b p}{(a+b)p+c(1-p)}-1$,  and
$
\sigma^2:=\frac{p(a-b)^2+c^2(1-p)}{4\big((a+b)p+c(1-p)\big)^2}.
$
\end{cor}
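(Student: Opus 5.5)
The plan is to write the recursion (\ref{Z-1}) in the normalized stochastic-approximation form required by Theorem 3.1 of Shi et al. \cite{S-M-1}, and then check its hypotheses one by one using Theorems \ref{thm1} and \ref{thm2}. Put $\gamma:=(a+b)p+c(1-p)$. Since $f(x)=-2bp\,(x-\tfrac12)$, we have
\begin{equation*}
Z_{n+1}-\tfrac12=\Big(Z_n-\tfrac12\Big)-\frac{1}{n+1}\cdot\frac{n+1}{T_{n+1}}\,2bp\Big(Z_n-\tfrac12\Big)+\frac{1}{n+1}\cdot\frac{n+1}{T_{n+1}}\,\Delta M_{n+1}.
\end{equation*}
So the drift is linear with equilibrium $\tfrac12$. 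Its effective slope is $-2bp\cdot\frac{n+1}{T_{n+1}}$, which tends to the deterministic limit $-2bp/\gamma$ by the first part of Theorem \ref{thm2}. This limit is the quantity entering the condition $\frac{-bp}{\gamma}<-\frac12$ and the constant $\alpha$, once it is matched to the parametrization used in \cite{S-M-1}. The noise term is $\frac{n+1}{T_{n+1}}\Delta M_{n+1}$.

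\textbf{Step 1 (boundedness).} Theorem \ref{thm1}(i)--(iii) give that $\frac{n+1}{T_{n+1}}$ is bounded above and below by positive constants, that $f$ is bounded on $[0,1]$, and that the increments $\Delta M_{n+1}$ are uniformly bounded.

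\textbf{Step 2 (approximate martingale differences).} Because $T_{n+1}$ depends on $\xi_{n+1}$, the noise term is not an exact martingale difference. Theorem \ref{thm1}(iv) shows that its conditional mean is $O(n^{-1})$ after multiplying by $n+1$. Summed against the weights $(n+1)^{-1}$, this contributes a term of order $O(n^{-1}\log n)$, which is $o\big(\sqrt{\log\log n/n}\big)$. Hence it can be absorbed into the remainder that \cite{S-M-1} allows.

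\textbf{Step 3 (consistency).} We need $Z_n\to\frac12$ a.s. This is the result of Alves and Rosales \cite{A-R}. It can also be recovered from the large-deviation corollary above together with Borel--Cantelli.

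\textbf{Step 4 (limiting conditional variance).} The second part of Theorem \ref{thm2} supplies the a.s. limit $\sigma^2$. As a sanity check, when $Z_n=\frac12$ the centred increment $\Delta Y^1_{n+1}-\tfrac12\Delta T_{n+1}$ takes the following values:
\begin{itemize}
\item[(a)] under the Friedman rule (probability $p$), it equals $\pm\frac{a-b}{2}$;
\item[(b)] under the P\'olya rule (probability $1-p$), it equals $\pm\frac{c}{2}$.
\end{itemize}
This gives variance $\frac{p(a-b)^2+(1-p)c^2}{4}$, and dividing by $\gamma^2$ yields $\sigma^2$.

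With these four ingredients in hand, Theorem 3.1 of \cite{S-M-1} yields the stated $\limsup$ with constant $\sigma/(2\alpha+1)^{1/2}$.

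The step I expect to be the main obstacle is replacing the random step size $1/T_{n+1}$ by the deterministic $1/(\gamma(n+1))$, in case \cite{S-M-1} demands deterministic gains $1/n$ with a fixed drift slope. The error is
\begin{equation*}
\Big(\frac{n+1}{T_{n+1}}-\frac1\gamma\Big)\Big(f(Z_n)+\Delta M_{n+1}\Big).
\end{equation*}
To control it, I would first apply the classical law of the iterated logarithm to the i.i.d. sum $T_n=T_0+\sum_{k\le n}\Delta T_k$. This gives $\frac{T_n}{n}-\gamma=O\big(\sqrt{\log\log n/n}\big)$ a.s.
\begin{itemize}
\item The drift part of the error is then a product of two vanishing factors, since $f(Z_n)\to0$, so it is $o\big(\sqrt{\log\log n/n}\big)$ a.s.
\item The noise part is a martingale transform with coefficients tending to zero. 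Its contribution, after the exponential weighting coming from the linear recursion, is negligible at the $\sqrt{\log\log n/n}$ scale by a standard martingale LIL/Kronecker argument.
\end{itemize}
If this bookkeeping fails in that form, I would instead verify the hypotheses of \cite{S-M-1} directly with the random gains $\frac{n+1}{T_{n+1}}$, which Theorem \ref{thm2} shows converge a.s.
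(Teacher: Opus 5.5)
Your route is the paper's route. The paper gives no argument beyond asserting that Theorems \ref{thm1} and \ref{thm2}, together with $Z_n\to\frac12$ a.s., place $(Z_n)$ under Theorem~3.1 of \cite{S-M-1}. Your Steps~1--4 are that verification, and they are fine, including the variance check and the handling of the $O(n^{-2})$ conditional bias.

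\textbf{The gap.} It is your sentence that the limiting slope $-2bp/\gamma$ ``is the quantity entering the condition $\frac{-bp}{\gamma}<-\frac12$ and the constant $\alpha$, once it is matched to the parametrization used in \cite{S-M-1}.'' No reparametrization achieves this.
\begin{itemize}
\item You correctly found that, with $\Gamma=2bp/\gamma$ and $V_{n+1}=\frac{(n+1)\Delta M_{n+1}}{T_{n+1}}$, one has $Z_{n+1}-\frac12=\bigl(1-\frac{\Gamma_{n+1}}{n+1}\bigr)(Z_n-\frac12)+\frac{V_{n+1}}{n+1}$ with $\Gamma_{n+1}\to\Gamma$. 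Up to negligible terms, $Z_n-\frac12\approx n^{-\Gamma}\sum_{k\le n}k^{\Gamma-1}V_k$.
\item The CLT and the martingale LIL for this weighted sum are governed by the same normalized bracket $\sigma^2/(2\Gamma-1)$. This is exactly the variance in the paper's own Theorem \ref{thm2-1}, and at $p=1$ it reproduces Freedman's classical result for Friedman's urn.
\item The statement instead has $2\alpha+1=\frac{2bp}{\gamma}-1=\Gamma-1$. It treats the slope as $bp/\gamma$, as if $f'(\frac12)=-bp$ rather than $-2bp$.
\item Under the stated hypothesis $\Gamma>1$, the printed constant $\sigma/\sqrt{\Gamma-1}$ strictly exceeds the value $\sigma/\sqrt{2\Gamma-1}$ forced by the bracket. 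Note $\sigma>0$ because $c>0$ and $p<1$. So the equality as printed cannot hold.
\item Conversely, take the paper's Example ($a=1$, $b=3$, $c=2$, $p=\frac14$). Here $\Gamma=0.6>\frac12$, so the LIL holds with constant $\sqrt{4/5}$. Yet the stated hypothesis fails, since $-bp/\gamma=-0.3\not<-\frac12$.
\end{itemize}

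\textbf{What your plan actually proves.} Carried out honestly, it gives the corrected statement. If $\frac{-2bp}{\gamma}<-\frac12$ (equivalently $3bp+cp>ap+c$, the same condition as in Theorem \ref{thm2-1}), then the $\limsup$ equals $\sigma/(2\alpha+1)^{1/2}$ with $\alpha=\frac{2bp}{\gamma}-1$, that is $\sigma/\sqrt{2\Gamma-1}$. The printed corollary, and the paper's one-line appeal to \cite{S-M-1}, appear to contain this factor-$2$ slip. The matching step is exactly where it should be caught; deferring it to an unspecified parametrization hides the one place where the argument breaks.

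\textbf{A smaller point.} In your ``main obstacle'' paragraph, $\frac{n+1}{T_{n+1}}$ is not $\FF_n$-measurable. Before treating the noise error as a martingale transform, replace $T_{n+1}$ by $T_n$. This costs only $\bigl|\frac{1}{T_{n+1}}-\frac{1}{T_n}\bigr|=O(n^{-2})$ per step.
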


\begin{thm}\label{thm2-1}
Let $Y^1_0$ and $Y^2_0$ be non-negative integers such that $T_0:=Y^1_0+Y^2_0>0$.  Define the proportion of balls of type 1 in the urn as $Z_n:=\frac{Y^1_n}{T_n}$, as indicated in (\ref{Z-1}).
For any positive integers $a,b,c$ and any $p\in (0,1)$,  if the condition $3 b p+c p>a p+c$ is satisfied, we have
\beq\label{CLT}
\sqrt{n} \left(Z_n-\frac{1}{2}\right)\stackrel{\dd}{\longrightarrow} \nn\left(0,\frac{\sigma^2}{2\Gamma-1}\right),
\deq
where
$$
\sigma^2:=\frac{p(a-b)^2+c^2(1-p)}{4\big((a+b)p+c(1-p)\big)^2}
$$
and
$$
\Gamma:= \frac{2b p}{(a+b)p+c(1-p)}.
$$
\end{thm}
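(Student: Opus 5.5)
Set $U_n:=Z_n-\tfrac12$ and $m:=(a+b)p+c(1-p)$. Since $f(x)=-2bp\,(x-\tfrac12)$, the recursion (\ref{Z-1}) becomes
\[
U_{n+1}=U_n-\frac{2bp}{T_{n+1}}\,U_n+\frac{\Delta M_{n+1}}{T_{n+1}} .
\]
The hypothesis $3bp+cp>ap+c$ is equivalent to $4bp>m$, that is, $\Gamma=2bp/m>\tfrac12$. This is exactly the stability condition in the Robbins--Monro type central limit theorem. I would therefore put the recursion into the standard stochastic-approximation form
\[
U_{n+1}=U_n+\frac{1}{n+1}\bigl(-\Gamma U_n+\varepsilon_{n+1}+r_{n+1}\bigr),
\]
with deterministic step $1/(n+1)$, a martingale difference noise $\varepsilon_{n+1}$ and a remainder $r_{n+1}$. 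I would then invoke a known CLT for such recursions (Fabian's theorem, or the version in Renlund / Zhang used in urn papers). That theorem gives $\sqrt n\,U_n\to N(0,\sigma^2/(2\Gamma-1))$ provided $\Gamma>1/2$, the noise has bounded increments with conditional variance tending to $\sigma^2$ a.s., and $\sqrt n\sum_{k\le n}(k/n)^{\Gamma}k^{-1}r_k\to0$ in probability.

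\textbf{Step 1 (noise).} Define
\[
\varepsilon_{n+1}:=\frac{n+1}{T_{n+1}}\Delta M_{n+1}-\ee\Bigl[\frac{n+1}{T_{n+1}}\Delta M_{n+1}\Big|\FF_n\Bigr].
\]
By Theorem \ref{thm1}(i),(iii) these are uniformly bounded martingale differences. By Theorem \ref{thm1}(iv) the centering term is $O(1/n)$, so its square and its cross term with the first part vanish. Hence Theorem \ref{thm2} gives $\ee[\varepsilon_{n+1}^2|\FF_n]\to\sigma^2$ a.s., and a Lindeberg condition is automatic from boundedness. The centering term $(n+1)\ee[\Delta M_{n+1}/T_{n+1}|\FF_n]=O(1/n)$ goes into $r_{n+1}$.

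\textbf{Step 2 (drift with random step).} Write
\[
\frac{2bp(n+1)}{T_{n+1}}=\Gamma+\delta_n,\qquad \delta_n:=2bp\Bigl(\frac{n+1}{T_{n+1}}-\frac1m\Bigr).
\]
Since $T_n-T_0$ is a sum of i.i.d.\ bounded variables with mean $m$, the ordinary LIL (or Theorem \ref{thm2} with a rate) gives $\delta_n=O(\sqrt{\log\log n/n})$ a.s. The remainder then contains $-\delta_nU_n$ and the $O(1/n)$ term from Step 1.

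\textbf{Step 3 (remainder control; main obstacle).} The hardest step is showing
\[
\sqrt n\sum_{k\le n}\Bigl(\frac kn\Bigr)^{\Gamma}\frac{|\delta_k U_k|}{k}\to0 .
\]
This needs an a priori rate for $U_k$. I would first prove $\ee U_n^2=O(1/n)$ by squaring the recursion: for large $n$, $\ee U_{n+1}^2\le(1-2\Gamma'/n)\ee U_n^2+C/n^2$ with some $\Gamma'\in(1/2,\Gamma)$, using $T_{n+1}/(n+1)\to m$ together with the bounds of Theorem \ref{thm1} to handle the random factor. Alternatively, I would use the a.s.\ bound $U_n=O(\sqrt{\log\log n/n})$ from the LIL corollary. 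Granting $\delta_kU_k=O_P(\log\log k/k)$, the sum is of order
\[
n^{1/2-\Gamma}\sum_k k^{\Gamma-2}\log\log k=o(1),
\]
since $\Gamma>1/2$. The $O(1/k)$ term is handled the same way.

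Combining the three steps with the cited stochastic-approximation CLT yields (\ref{CLT}).
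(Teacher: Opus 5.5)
Your argument is correct, but it is organised differently from the paper's.

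\textbf{How the two routes differ.} The paper keeps the random gain $\Gamma_{n+1}=2bp(n+1)/T_{n+1}$ inside the recursion for $X_n=\sqrt n\,(Z_n-\tfrac12)$ and proves the stochastic-approximation CLT by hand, in two steps. First, assuming a constant gain $\Gamma$ and exactly centred noise, it compares the characteristic function of $X_n$ with that of an auxiliary Gaussian recursion $\beta_n$. Then it drops those assumptions by showing $\ee|X_n-D_n|\to 0$. This uses only the a.s.\ convergences $\Gamma_{n+1}\to\Gamma$ and $\ee[V_{n+1}^2|\FF_n]\to\sigma^2$, with no rates.

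You instead freeze the gain at its deterministic limit and put both the gain error $\delta_nU_n$ and the $O(1/n)$ conditional bias into a remainder. The recursion is then linear with deterministic coefficients, so it can be solved explicitly: $\sqrt n\,U_n\approx\sqrt n\sum_{k\le n}\pi_{k,n}k^{-1}(\varepsilon_k+r_k)$ with $\pi_{k,n}=\prod_{j=k+1}^{n}(1-\Gamma/j)\sim(k/n)^{\Gamma}$. The CLT then reduces to the martingale CLT (or Fabian's theorem) for the $\varepsilon$-part.

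The price is quantitative input: a rate for $T_n/n$ and an a priori bound $\ee U_n^2=O(1/n)$. Both are cheap here, since $T_n-T_0$ is an i.i.d.\ sum and $|U_n|\le\tfrac12$. In return, all the randomness of $T_n$ sits in a term you control in $L^1$. This avoids a delicate point in the paper's Step 2: the threshold $N_1$ obtained from a.s.\ convergence is random, yet the resulting inequality for $|\Delta_{n+1}|$ is integrated. Your $L^2$ bound also gives $Z_n\to\tfrac12$ in $L^2$. That is enough for the conditional-variance condition of the martingale CLT, so you could even do without the Alves--Rosales result.

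\textbf{Details to tighten.}

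First, in the $L^2$ estimate the factor $2bp/T_{n+1}$ is correlated with $U_n^2$. So a.s.\ convergence of $T_{n+1}/(n+1)$ alone does not give $\ee[U_n^2/T_{n+1}]\ge \ee U_n^2/((m+\epsilon)(n+1))$. Fix it as follows:
\begin{itemize}
\item split on $A_n=\{T_{n+1}\le (m+\epsilon)(n+1)\}$ and use $|U_n|\le\tfrac12$ with Chebyshev, so that $\pp(A_n^c)=O(1/n)$ contributes only $O(n^{-2})$;
\item bound the cross term by $O(n^{-2})$ via Theorem \ref{thm1}(iv) and the trivial bound on $\ee[\Delta M_{n+1}/T_{n+1}^2|\FF_n]$.
\end{itemize}
This yields $\ee U_{n+1}^2\le (1-2\Gamma'/(n+1))\,\ee U_n^2+C/n^2$ with $\Gamma'=2bp/(m+\epsilon)$ and $2\Gamma'>1$, hence $\ee U_n^2=O(1/n)$.

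Second, termwise $O_P$ bounds cannot be summed over $k$. Use instead $\ee|\delta_kU_k|\le(\ee\delta_k^2)^{1/2}(\ee U_k^2)^{1/2}=O(1/k)$. Here $\ee\delta_k^2=O(1/k)$ follows from the variance of $T_{k+1}$ together with Theorem \ref{thm1}(i). The remainder sum then tends to $0$ in $L^1$, and the LIL is not needed.

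Third, do not fall back on the paper's LIL corollary for an a.s.\ rate of $U_n$. As stated it assumes $bp/m>\tfrac12$, i.e.\ $\Gamma>1$, which is stronger than the present hypothesis $\Gamma>\tfrac12$.
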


\begin{rem}
The condition $3 b p+c p>a p+c$ ensures that $2\Gamma-1>0$, making the result meaningful.
\end{rem}

\begin{rem}\label{rem-1}
  Under these conditions, Alves and  Rosales \cite{A-R}  showed that $$\lim_{n\rightarrow \infty } Z_n=\frac{1}{2}\ \ a.s. $$
\end{rem}

\begin{exa}
Consider the case where $a=1$, $b=3$, $c=2$, and $p=\frac{1}{4}$, it is easy to verify that $3 b p+c p>a p+c$ holds. In this scenario, we have
$$
\sigma^2:=\frac{p(a-b)^2+c^2(1-p)}{4\big((a+b)p+c(1-p)\big)^2}=\frac{\frac{1}{4}\times  4+4 \times \frac{3}{4}}{4\left(1+\frac{3}{2} \right)^2} =\frac{4}{25}
$$
and
$$
\Gamma:= \frac{2b p}{(a+b)p+c(1-p)}=\frac{3}{5}.
$$
Since  we can conclude that
$$
\sqrt{n} \left(Z_n-\frac{1}{2}\right)\stackrel{\dd}{\longrightarrow} \nn\left(0,\frac{4}{5}\right).
$$
\end{exa}

\section{Proofs of main results}
Throughout this section, we shall use some notation which has been defined in Section 2, for example, $(\Delta M_{n})_{n\ge 1}$, $(Z_n)_{n\ge 0}$, $(T_n)_{n\ge 1}$, and so on.

\begin{proof}[{\bf Proof of Theorem \ref{thm1}}]
\begin{enumerate}[(i)]
\item
From (\ref{T}), we know that
$$
T_n+\min\{a+b,c\}\le T_{n+1}\le T_n+\max\{a+b,c\},
$$
hence we have
\beq\label{T-2}
\frac{1}{T_0+n\max\{a+b,c\}}\le\frac{1}{T_n}\le \frac{1}{n\min\{a+b,c\}}.
\deq

\item
For any $x\in [0,1]$, note that $p\in(0,1)$, it follows that
$$
\left|f(x)\right|=\left|b p\left(1-2x\right)\right|\le b p< b.
$$

\item
From (\ref{Y1}), (\ref{T1}) and (\ref{M}), we have
\begin{align*}
|\Delta M_{n+1}|&=\left|\Delta Y_{n+1}^1-\Delta T_{n+1}Z_n-f(Z_n)\right|\\
&\le \left|\Delta Y_{n+1}^1\right|+\left|\Delta T_{n+1}Z_n\right|+\left|f(Z_n)\right|\\
&\le 2(a+b+c)+b=2a+3b+2c.
\end{align*}

\item
From (\ref{T}) and (\ref{M}), it is not difficult to see that
\begin{align*}
\frac{\Delta M_{n+1}}{T_{n+1}}&=\frac{\Delta Y_{n+1}^1-\Delta T_{n+1}Z_n-f(Z_n)}{T_n+(a+b)\ii_{\{\xi_{n+1}=1\}}+c\ii_{\{\xi_{n+1}=0\}}}\\
&=\frac{a \ii_{\{\xi_{n+1}=1,\eta_{n+1}=1\}}+b\ii_{\{\xi_{n+1}=1,\eta_{n+1}=0\}}+c \ii_{\{\xi_{n+1}=0,\eta_{n+1}=1\}}}{T_n+(a+b)\ii_{\{\xi_{n+1}=1\}}+c\ii_{\{\xi_{n+1}=0\}}}\\
&\ \ \ \ \ \ \ \ -\frac{(a+b)\ii_{\{\xi_{n+1}=1\}}+c\ii_{\{\xi_{n+1}=0\}}}{T_n+(a+b)\ii_{\{\xi_{n+1}=1\}}+c\ii_{\{\xi_{n+1}=0\}}}Z_n\\
&\ \ \ \ \ \ \ \ -\frac{-2b p Z_n+b p}{T_n+(a+b)\ii_{\{\xi_{n+1}=1\}}+c\ii_{\{\xi_{n+1}=0\}}}\\
&=\frac{a \ii_{\{\eta_{n+1}=1\}}+b\ii_{\{\eta_{n+1}=0\}}}{T_n+a+b}\ii_{\{\xi_{n+1}=1\}}+\frac{c \ii_{\{\eta_{n+1}=1\}}}{T_n+c}\ii_{\{\xi_{n+1}=0\}}\\
&\ \ \ \ \ \ \ \ -\frac{aZ_n+bZ_n-2b p Z_n+b p}{T_n+a+b}\ii_{\{\xi_{n+1}=1\}}-\frac{cZ_n-2b p Z_n+b p}{T_n+c}\ii_{\{\xi_{n+1}=0\}}\\
&=\frac{a \ii_{\{\eta_{n+1}=1\}}+b\ii_{\{\eta_{n+1}=0\}}-\left(aZ_n+bZ_n-2b p Z_n+b p\right)}{T_n+a+b}\ii_{\{\xi_{n+1}=1\}}\\
&\ \ \ \ \ \ \ \ +\frac{c \ii_{\{\eta_{n+1}=1\}}-\left(cZ_n-2b p Z_n+b p\right)}{T_n+c}\ii_{\{\xi_{n+1}=0\}}.
\end{align*}
Hence we can get that
\begin{align*}
\ee\left[\frac{\Delta M_{n+1}}{T_{n+1}}\Big|\FF_n\right]&=\ee\left[\frac{a \ii_{\{\eta_{n+1}=1\}}+b\ii_{\{\eta_{n+1}=0\}}-(aZ_n+bZ_n-2b p Z_n+b p)}{T_n+a+b}\ii_{\{\xi_{n+1}=1\}}\Bigg|\FF_n\right]\\
&\ \ \ \ \ \ \ \ +\ee\left[\frac{c \ii_{\{\eta_{n+1}=1\}}-(cZ_n-2b p Z_n+b p)}{T_n+c}\ii_{\{\xi_{n+1}=0\}}\Bigg|\FF_n\right]\\
&=\frac{a Z_n+b(1-Z_n)-aZ_n-bZ_n+2b p Z_n-b p}{T_n+a+b}p\\
&\ \ \ \ \ \ \ \ \ +\frac{c Z_n-cZ_n+2b p Z_n-b p}{T_n+c}(1-p)\\
&=\frac{b-2bZ_n+2b p Z_n-b p}{T_n+a+b}p+\frac{2b p Z_n-b p}{T_n+c}(1-p)\\
&=\frac{bp(1-p)-2bpZ_n(1-p)}{T_n+a+b}+\frac{2b p Z_n(1-p)-b p(1-p)}{T_n+c}\\
&=bp(1-p)\left(\frac{1}{T_n+a+b}-\frac{1}{T_n+c}\right)\\
&\ \ \ \ \ \ \ \ +2bpZ_n(1-p)\left(\frac{1}{T_n+c}-\frac{1}{T_n+a+b}\right)\\
&=\left(bp(1-p)-2bpZ_n(1-p)\right)\left(\frac{1}{T_n+a+b}-\frac{1}{T_n+c}\right)\\
&=bp(1-p)(1-2Z_n)\frac{c-a-b}{(T_n+a+b)(T_n+c)}.
\end{align*}
From (\ref{T-2}), we have
$$
\left|\ee\left[\frac{\Delta M_{n+1}}{T_{n+1}}\Big|\FF_n\right]\right|\le \frac{K}{n^2},
$$
where $K$ is a positive constant.
\end{enumerate}
\end{proof}

\begin{proof}[{\bf Proof of Theorem \ref{thm2}}]
From (\ref{T}),
$$
\frac{T_n}{n}=\frac{T_0+\sum_{k=1}^{n} \left[(a+b)\ii_{\{\xi_k=1\}}+c\ii_{\{\xi_k=0\}}\right]}{n}.
$$
Since $(\xi_n)_{n\ge 1}$ is a sequence of independent and identically distributed random variables with
$\pp(\xi_n=1)=p$ and $\pp(\xi_n=0)=1-p$,
the law of large numbers  implies that as $n\rightarrow\infty$,
\beq\label{T3}
\frac{T_n}{n}\rightarrow (a+b)p+c(1-p)\ \ a.s.
\deq
From (\ref{M}), it is not difficult to see that
$$
\aligned
\ee\left[\left(\Delta M_{n+1}\right)^2\Big|\FF_n\right]&=\ee\left[\left(\Delta Y_{n+1}^1-\Delta T_{n+1}Z_n-\ee\left[\left(\Delta Y_{n+1}^1-\Delta T_{n+1}Z_n\right)\Big|\FF_n\right]\right)^2\Big|\FF_n\right]\\
&=\ee\left[\left(\Delta Y_{n+1}^1-\Delta T_{n+1}Z_n\right)^2\Big|\FF_n\right]-\ee\left[\left(\Delta Y_{n+1}^1-\Delta T_{n+1}Z_n\right)\Big|\FF_n\right]^2.
\endaligned
$$
From (\ref{M1}), it follows that
$$
\ee\left[\left(\Delta Y_{n+1}^1-\Delta T_{n+1}Z_n\right)\Big|\FF_n\right]^2=\left(-2b pZ_n+b p\right)^2.
$$
Therefore, we only need to handle item $\ee\left[\left(\Delta Y_{n+1}^1-\Delta T_{n+1}Z_n\right)^2\Big|\FF_n\right]$. From (\ref{Y1}), we know that
$$
\aligned
\ee\left[\left(\Delta Y_{n+1}^1\right)^2\Big|\FF_n\right]&=\ee\left[\left(a \ii_{\{\xi_{n+1}=1,\eta_{n+1}=1\}}+b\ii_{\{\xi_{n+1}=1,\eta_{n+1}=0\}}+c \ii_{\{\xi_{n+1}=0,\eta_{n+1}=1\}}\right)^2\Big|\FF_n\right]\\
&=\ee\left[a^2 \ii_{\{\xi_{n+1}=1,\eta_{n+1}=1\}}+b^2\ii_{\{\xi_{n+1}=1,\eta_{n+1}=0\}}+c^2 \ii_{\{\xi_{n+1}=0,\eta_{n+1}=1\}}\Big|\FF_n\right]\\
&=a^2p Z_n+b^2p (1-Z_n)+c^2(1-p)Z_n.
\endaligned
$$
From (\ref{T1}),
$$
\aligned
\ee\left[\left(\Delta T_{n+1}Z_n\right)^2\Big|\FF_n\right]&=\ee\left[Z_n^2\left((a+b)\ii_{\{\xi_{n+1}=1\}}+c\ii_{\{\xi_{n+1}=0\}}\right)^2\Big|\FF_n\right]\\
&=Z_n^2\ee\left[(a+b)^2\ii_{\{\xi_{n+1}=1\}}+c^2\ii_{\{\xi_{n+1}=0\}}\Big|\FF_n\right]\\
&=Z_n^2(a+b)^2p+Z_n^2c^2(1-p).
\endaligned
$$
Observe that
$$
\aligned
&\ee\left[\Delta Y_{n+1}^1\Delta T_{n+1}Z_n\Big|\FF_n\right]=Z_n\ee\left[\Delta Y_{n+1}^1\Delta T_{n+1}\Big|\FF_n\right]\\
&=Z_n\ee\left[a(a+b) \ii_{\{\xi_{n+1}=1,\eta_{n+1}=1\}}+b(a+b)\ii_{\{\xi_{n+1}=1,\eta_{n+1}=0\}}+c^2 \ii_{\{\xi_{n+1}=0,\eta_{n+1}=1\}}\Big|\FF_n\right]\\
&=Z_n\left(a(a+b)p Z_n+b(a+b)p (1-Z_n)+c^2(1-p)Z_n\right)\\
&=(a+b)pZ_n\left(aZ_n+b(1-Z_n)\right)+c^2(1-p)Z_n^2.
\endaligned
$$
Hence we can get that
$$
\aligned
&\ee\left[\left(\Delta Y_{n+1}^1-\Delta T_{n+1}Z_n\right)^2\Big|\FF_n\right]\\
&=\ee\left[\left(\Delta Y_{n+1}^1\right)^2\Big|\FF_n\right]+\ee\left[\left(\Delta T_{n+1}Z_n\right)^2\Big|\FF_n\right]-2\ee\left[\Delta Y_{n+1}^1\Delta T_{n+1}Z_n\Big|\FF_n\right]\\
&=a^2p Z_n+b^2p (1-Z_n)+c^2(1-p)Z_n+Z_n^2(a+b)^2p\\
&\ \ \ \ \ \ \ \ \ \ -2(a+b)pZ_n\left(aZ_n+b(1-Z_n)\right)-c^2(1-p)Z_n^2.
\endaligned
$$
From Remark \ref{rem-1}, we know that $\displaystyle\lim_{n\rightarrow\infty }Z_n=\frac{1}{2}$, hence we can get that as $n\rightarrow\infty$,
$$
\ee\left[\left(\Delta Y_{n+1}^1-\Delta T_{n+1}Z_n\right)^2\Big|\FF_n\right]\rightarrow \frac{1}{4}\left(p(a-b)^2+c^2(1-p)\right)
$$
and
$$
\ee\left[\left(\Delta Y_{n+1}^1-\Delta T_{n+1}Z_n\right)\Big|\FF_n\right]^2=\left(-2b pZ_n+b p\right)^2\rightarrow0.
$$
Together with (\ref{T3}),
$$
\ee\left[\left(\frac{n+1}{T_{n+1}} \Delta M_{n+1}\right)^2\Bigg|\FF_{n} \right]\rightarrow \frac{p(a-b)^2+c^2(1-p)}{4\big((a+b)p+c(1-p)\big)^2}\ \ \ a.s.
$$
\end{proof}

\begin{lem}\label{cor-1}
Suppose that $(h_n)_{n\ge 1}$ is a sequence of real number such that
$$
h_{n+1}=\left(1-\frac{L_n}{n+1}\right) h_n+\frac{Q_n}{n+1},
$$
where $\displaystyle\lim_{n\rightarrow\infty} L_n=L_0>0$ and $\displaystyle\lim_{n\rightarrow\infty} Q_n=Q_0\ge0$, then we have
$$
\displaystyle\lim_{n\rightarrow\infty} h_n=\frac{Q_0}{L_0}.
$$
\end{lem}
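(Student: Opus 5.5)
The plan is to subtract the candidate limit and show that the error tends to zero. Set $g_n:=h_n-\frac{Q_0}{L_0}$. Substituting into the recursion gives
\begin{equation*}
g_{n+1}=\Bigl(1-\frac{L_n}{n+1}\Bigr)g_n+\frac{\varepsilon_n}{n+1},\qquad \varepsilon_n:=Q_n-\frac{L_nQ_0}{L_0}.
\end{equation*}
Since $L_n\to L_0$ and $Q_n\to Q_0$, we have $\varepsilon_n\to 0$. So it suffices to prove that $g_n\to0$ whenever the coefficients $L_n$ tend to $L_0>0$ and the forcing terms $\varepsilon_n$ tend to $0$.

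Fix $\delta\in(0,L_0/2)$ and $\eta>0$. Choose $N$ so large that for all $n\ge N$ we have $L_n\ge L_0-\delta$, $L_n\le n+1$ (so that the factor $1-\frac{L_n}{n+1}$ lies in $[0,1)$), and $|\varepsilon_n|\le\eta$. For $n\ge N$, the triangle inequality gives
\begin{equation*}
|g_{n+1}|\le\Bigl(1-\frac{L_0-\delta}{n+1}\Bigr)|g_n|+\frac{\eta}{n+1}.
\end{equation*}
Write $\lambda:=L_0-\delta>0$ and $u_n:=|g_n|-\eta/\lambda$. Then
\begin{equation*}
u_{n+1}\le\Bigl(1-\frac{\lambda}{n+1}\Bigr)u_n.
\end{equation*}

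There are two cases.
\begin{itemize}
\item If $u_n\le 0$ for some $n\ge N$, the inequality keeps $u_m\le0$ for all $m\ge n$, because the factor is nonnegative.
\item While $u_n>0$, iterating gives $u_n\le u_N\prod_{k=N+1}^{n}\bigl(1-\frac{\lambda}{k}\bigr)$. This product tends to $0$, since $\sum_k \frac{\lambda}{k}=\infty$ and $1-x\le e^{-x}$.
\end{itemize}
In either case $\limsup_n |g_n|\le \eta/\lambda$. Letting $\eta\downarrow0$ gives $g_n\to0$, which is the claim.

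The main point needing care is the sign of the factor $1-\frac{L_n}{n+1}$. For small $n$ it may be negative, and there the contraction argument fails. This is why all estimates start from an index $N$ with $L_n<n+1$. The finitely many earlier terms only affect the finite value $g_N$ and do not change the limit. Note also that the hypothesis $Q_0\ge0$ is not needed in this argument.
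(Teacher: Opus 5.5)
Your proof is correct. Its setup matches the paper's: both subtract $Q_0/L_0$, pass to absolute values, and reduce to $d_{n+1}\le\bigl(1-\frac{a_0}{n+1}\bigr)d_n+\frac{\varepsilon_n}{n+1}$ for large $n$.

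The two arguments differ in the final step. The paper argues by contradiction. It assumes $\limsup_n d_n=A_0>0$, takes a subsequence with $d_{n_k}\to A_0$, and derives the one-step decrease $d_{n_k+1}\le d_{n_k}-\frac{a_0(A_0-\delta)}{2(n_k+1)}$. It then ``iterates'' this to obtain a harmonic-sum decrease that would drive $d_n$ negative. As written, that iteration is not justified. The one-step decrease at index $n_k+j$ requires $d_{n_k+j}>A_0-\delta$, which is known only along the subsequence. That lower bound also cannot persist for all $m$, because the claimed bound would itself push $d$ below $A_0-\delta$. (The paper also tacitly takes $A_0<\infty$.)

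The paper's argument can be repaired with a threshold argument. Fix $\theta>0$ and take $n$ large enough that $\varepsilon_n<a_0\theta/2$. Then $d_n$ decreases by at least $\frac{a_0\theta}{2(n+1)}$ per step while it exceeds $\theta$, and it never climbs back above $\theta$ once it is at most $\theta$.

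Your shift $u_n=|g_n|-\eta/\lambda$ does this bookkeeping in one stroke. It absorbs the forcing term into the homogeneous contraction $u_{n+1}\le\bigl(1-\frac{\lambda}{n+1}\bigr)u_n$. This iterates legitimately because the factors are nonnegative, which you secure through $\lambda\le L_n\le n+1$; the paper leaves that point implicit. It also gives the quantitative bound $\limsup_n|g_n|\le\eta/\lambda$ directly, with no subsequences and no boundedness issue.

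Your remark that the hypothesis $Q_0\ge 0$ is superfluous is also correct.
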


\begin{proof} Let
$$
p_n:= h_n-\frac{Q_0}{L_0}.
$$
Hence we can get that
$$
p_{n+1}+\frac{Q_0}{L_0}=\left(1-\frac{L_n}{n+1}\right) \left(p_n+\frac{Q_0}{L_0}\right)+\frac{Q_n}{n+1}.
$$
After a simple calculation, it can be determined that
$$
p_{n+1}=\left(1-\frac{L_n}{n+1}\right)p_n+\frac{s_n}{n+1},
$$
where $s_n:=Q_n-L_n \frac{Q_0}{L_0}$.
Since $\displaystyle\lim_{n\rightarrow\infty} L_n=L_0>0$ and $\displaystyle\lim_{n\rightarrow\infty} Q_n=Q_0\ge0$, there exist constants $N_0$ and $a_0$ such that for all $n>N_0$, we have $L_n>a_0>0$. Moreover, since $s_n\rightarrow 0$, there exists a sequence $\varepsilon_n\rightarrow0$ such that $|s_n|\le \varepsilon_n $ for all $n>N_0$.
Define $d_n:=|p_n|$, then for sufficiently large $n$,
$$
d_{n+1}\le \left(1-\frac{a_0}{n+1}\right)d_n+\frac{\varepsilon_n}{n+1}.
$$
Assume that $\displaystyle\limsup_{n\rightarrow\infty} d_n=A_0>0$, there exists a subsequence $(n_k)_{k\ge 0}$ such that $\displaystyle\lim_{k\rightarrow\infty} d_{n_k}=A_0$. Choose $\delta>0$ small enough and $K_0$ large enough so that for all $k>K_0$, $d_{n_k}>A_0-\delta$ and $\varepsilon_{n_k}<\frac{a_0 (A_0-\delta)}{2}$. Hence for all $k>K_0$, we have
$$
\aligned
d_{n_k+1}&\le \left(1-\frac{a_0}{n_k+1}\right)d_{n_k}+\frac{\varepsilon_{n_k}}{n_k+1}\\
&\le d_{n_k}- \frac{a_0\left(A_0-\delta\right)}{n_k+1}+\frac{a_0 (A_0-\delta)}{2\left(n_k+1\right)}\\
&\le d_{n_k}-\frac{a_0 (A_0-\delta)}{2(n_k+1)}.
\endaligned
$$
Iterating this bound gives that for any $m\ge 1$,
$$
d_{n_k+m}\le  d_{n_k}-\frac{a_0 \left(A_0-\delta\right)}{2}\sum_{i=n_k}^{n_k+m-1} \frac{1}{i+1}.
$$
Since $\sum_{i=n_k}^\infty \frac{1}{i+1}=\infty$, the right-hand side tends to
$-\infty$ as $m\rightarrow\infty$, which contradicts $d_{n}$ being non-negative.
Therefore, it is necessary that
$$
\limsup_{n\rightarrow\infty} d_n=0,
$$
which implies
$$
 \displaystyle\lim_{n\rightarrow\infty} h_n=\frac{Q_0}{L_0}.
$$
\end{proof}

\begin{cor}\label{lem-1}
Let $(X_n)_{n\ge 0}$ be a stochastic process taking values in $\rr$ that satisfies the recursion
$$
X_{n+1}=\left(1-\frac{a}{n+1}\right)X_n+\frac{K_{n+1}}{\sqrt{n+1}},
$$
where $a>0$ is a constant, and $(K_n)_{n\ge 1}$ is a sequence of independent and identically distributed random variables with distribution $\nn(0,\sigma^2)$, then we have
$$
\lim_{n\rightarrow\infty}\ee X_{n+1}^2= \frac{\sigma^2}{2 a}.
$$
\end{cor}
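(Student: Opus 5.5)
We will reduce this to Lemma \ref{cor-1} by deriving a deterministic recursion for the second moment $h_n:=\ee X_n^2$. Independence of $K_{n+1}$ from $X_n$ is the key structural fact. It holds because $X_n$ is a measurable function of $X_0$ and $K_1,\dots,K_n$ (by induction on the recursion), and $K_{n+1}$ is independent of these; here we implicitly assume $X_0$ is independent of $(K_n)$ with $\ee X_0^2<\infty$.

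\textbf{Step 1: the second-moment recursion.} Square the recursion and take expectations:
\begin{align*}
\ee X_{n+1}^2&=\Big(1-\frac{a}{n+1}\Big)^2\ee X_n^2+\frac{2}{\sqrt{n+1}}\Big(1-\frac{a}{n+1}\Big)\ee[X_nK_{n+1}]+\frac{\ee K_{n+1}^2}{n+1}.
\end{align*}
Since $K_{n+1}$ is independent of $X_n$ and $\ee K_{n+1}=0$, the cross term vanishes, and $\ee K_{n+1}^2=\sigma^2$. Finiteness of all $h_n$ follows by induction from this identity.

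\textbf{Step 2: put it in the form of Lemma \ref{cor-1}.} Expanding the square gives
$$
\Big(1-\frac{a}{n+1}\Big)^2=1-\frac{2a-\frac{a^2}{n+1}}{n+1},
$$
so
$$
h_{n+1}=\Big(1-\frac{L_n}{n+1}\Big)h_n+\frac{Q_n}{n+1},\qquad L_n:=2a-\frac{a^2}{n+1},\quad Q_n:=\sigma^2 .
$$
Then $L_n\to L_0=2a>0$ and $Q_n\to Q_0=\sigma^2\ge 0$. Lemma \ref{cor-1} yields $h_n\to \sigma^2/(2a)$, hence $\ee X_{n+1}^2\to\sigma^2/(2a)$.

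\textbf{Main obstacle.} The computation is routine; the only care points are justifying that the cross term vanishes and checking the hypotheses of Lemma \ref{cor-1}. For the cross term, we need independence of $K_{n+1}$ from $X_n$ and integrability of $X_nK_{n+1}$, which follows from $\ee X_n^2<\infty$ and Cauchy--Schwarz. For the lemma, the factor $1-L_n/(n+1)$ may be negative for small $n$ when $a$ is large, but this is harmless: the lemma only uses the limiting behaviour, i.e.\ its proof works for $n>N_0$, where $L_n>a_0>0$ and $1-L_n/(n+1)\in(0,1)$.
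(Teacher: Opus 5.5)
Your proposal is correct and is essentially the paper's argument: square the recursion, kill the cross term by independence, and apply Lemma \ref{cor-1} with $L_n=2a-\frac{a^2}{n+1}$. The paper first recentres at $\frac{\sigma^2}{2a}$ and applies the lemma with $Q_0=0$, which is redundant since the lemma recentres internally; your explicit assumption that $X_0$ is independent of $(K_n)$ with $\ee X_0^2<\infty$ makes precise what the paper uses tacitly, and your worry about a negative coefficient is unnecessary because $1-\frac{L_n}{n+1}=\bigl(1-\frac{a}{n+1}\bigr)^2\ge 0$ for every $n$.
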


\begin{proof}
Squaring both sides of the recursion gives
$$
X_{n+1}^2=\left(1-\frac{a}{n+1}\right)^2X_n^2+\frac{K_{n+1}^2}{n+1}+2\left(1-\frac{a}{n+1}\right)X_n\frac{K_{n+1}}{\sqrt{n+1}}.
$$
Taking the expectation on both sides and using the facts that $(K_n)_{n\ge 1}$ is a sequence of independent and identically distributed  random variables with $\ee K_n=0$ and $\ee K_n^2=\sigma^2$, and setting $b_n:=\ee X_n^2$, we obtain
$$
b_{n+1}=\left(1-\frac{a}{n+1}\right)^2 b_n+\frac{\sigma^2}{n+1}=\left(1-\frac{A_n}{n+1}\right) b_n+\frac{\sigma^2}{n+1},
$$
where $A_n:=2 a -\frac{a^2}{n+1}\rightarrow 2a, n\rightarrow\infty$. Define $c_n:=b_n-\frac{\sigma^2}{2 a}$, hence
$$
c_{n+1}+\frac{\sigma^2}{2 a}=\left(1-\frac{A_n}{n+1}\right)\left(c_n+\frac{\sigma^2}{2 a}\right)+\frac{\sigma^2}{n+1}.
$$
Through simple calculation, we obtain
\beq\label{c}
c_{n+1}=\left(1-\frac{A_n}{n+1}\right) c_n+\frac{\gamma_n}{n+1},
\deq
where $\gamma_n:=\sigma^2-\frac{A_n \sigma^2}{2 a}$, and note that $\displaystyle\lim_{n\rightarrow\infty} A_n=2 a>0$ and $\displaystyle\lim_{n\rightarrow\infty} \gamma_n=0$. From Lemma \ref{cor-1} we can get that
$$
\displaystyle\lim_{n\rightarrow\infty} c_n=0,
$$
which implies
$$
 \lim_{n\rightarrow\infty} \ee X_{n}^2= \lim_{n\rightarrow\infty} b_n=\frac{\sigma^2}{2 a}.
$$
\end{proof}

\begin{lem}\label{lem-2}\cite[Page 556, Lemma 1.2]{G}
For any $n\ge 0$,
$$
\left|e^{i y}-\sum_{k=0}^n \frac{(i y)^k}{k!}\right|\le \min\left\{2 \frac{|y|^n}{n!}, \frac{|y|^{n+1}}{(n+1)!}\right\}.
$$
\end{lem}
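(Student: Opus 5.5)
The plan is to express the remainder $R_n(y):=e^{iy}-\sum_{k=0}^n \frac{(iy)^k}{k!}$ as an integral in two different ways, one for each term in the minimum. The proof is by induction on $n$ (or by direct integration by parts), and it suffices to treat $y\ge 0$: for $y<0$ one takes complex conjugates, since $\overline{R_n(y)}=R_n(-y)$ and the bounds depend only on $|y|$.

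\textbf{Base case $n=0$.} Here $R_0(y)=e^{iy}-1$. Trivially $|R_0(y)|\le |e^{iy}|+1=2$, which is the bound $2|y|^0/0!$. Also
\[
R_0(y)=i\int_0^y e^{is}\,ds,
\]
so $|R_0(y)|\le |y|$.

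\textbf{Inductive step.} Differentiating the definition gives $\frac{d}{dy}R_n(y)=iR_{n-1}(y)$ with $R_n(0)=0$, hence
\[
R_n(y)=i\int_0^y R_{n-1}(s)\,ds .
\]
Assume the bound holds for $n-1$. The second term of the minimum at level $n-1$ gives $|R_{n-1}(s)|\le s^{n}/n!$, and integrating yields $|R_n(y)|\le y^{n+1}/(n+1)!$. The first term at level $n-1$ gives $|R_{n-1}(s)|\le 2s^{n-1}/(n-1)!$, and integrating yields $|R_n(y)|\le 2y^n/n!$. Both bounds therefore propagate, and taking the smaller of the two gives the stated minimum.

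\textbf{Closed forms (equivalent route).} Repeated integration by parts gives the explicit formulas
\[
R_n(y)=\frac{i^{n+1}}{n!}\int_0^y (y-s)^n e^{is}\,ds
\qquad\text{and}\qquad
R_n(y)=\frac{i^{n}}{(n-1)!}\int_0^y (y-s)^{n-1}\left(e^{is}-1\right)ds,
\]
the latter for $n\ge1$. One can bound these directly, using $|e^{is}|=1$ in the first and $|e^{is}-1|\le 2$ in the second.

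There is no real obstacle here. The only point needing care is the bookkeeping of the constant 2 in the inductive step: it must come from the base case $|e^{is}-1|\le 2$ and be carried through the induction, rather than being re-derived at each level.
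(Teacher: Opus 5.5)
Your proof is correct: the identity $R_n(y)=i\int_0^y R_{n-1}(s)\,ds$, together with the base case $|e^{iy}-1|\le\min\{2,|y|\}$, carries both bounds through the induction, and your two integral-remainder formulas are also right. The paper gives no proof of its own and only cites Gut's textbook, where this inequality is proved by this same standard induction/Taylor-remainder argument, so your approach matches the cited one.
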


\begin{lem} (Slutsky) \label{lem-3}\cite[Page 19]{S}
Let $X_n \stackrel{d}{\longrightarrow} X$ and $Y_n \stackrel{P}{\longrightarrow} c$, where $c$ is a finite constant. Then
\begin{enumerate}[$(i)$]
 \item $X_n+Y_n\stackrel{d}{\longrightarrow} X+c$;

 \item $X_n Y_n\stackrel{d}{\longrightarrow} c X$;

 \item $\frac{X_n}{Y_n} \stackrel{d}{\longrightarrow} \frac{X}{c}$ if $c\neq 0$.

\end{enumerate}
\end{lem}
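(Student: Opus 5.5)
The statement is Slutsky's lemma (Lemma \ref{lem-3}). I would prove it with the portmanteau characterization: $W_n\stackrel{d}{\longrightarrow} W$ if and only if $\ee g(W_n)\to \ee g(W)$ for every bounded Lipschitz $g$. As an alternative, characteristic functions combined with Lemma \ref{lem-2} also work.

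\emph{Part (i).} Fix a bounded Lipschitz $g$ with $|g|\le B$ and Lipschitz constant $L$, and fix $\varepsilon>0$. I split
$$|\ee g(X_n+Y_n)-\ee g(X+c)|\le \ee|g(X_n+Y_n)-g(X_n+c)|+|\ee g(X_n+c)-\ee g(X+c)|.$$
The second term tends to $0$, because $x\mapsto g(x+c)$ is again bounded Lipschitz and $X_n\stackrel{d}{\longrightarrow} X$. For the first term I work separately on $\{|Y_n-c|\le\varepsilon\}$ and on its complement, which gives the bound $L\varepsilon+2B\,\pp(|Y_n-c|>\varepsilon)$. This tends to $L\varepsilon$, and since $\varepsilon$ is arbitrary, (i) follows.

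\emph{Part (ii).} I write $X_nY_n=cX_n+X_n(Y_n-c)$. First, $cX_n\stackrel{d}{\longrightarrow} cX$ by the continuous mapping theorem. Next, $X_n(Y_n-c)\stackrel{P}{\longrightarrow}0$. To see this, use that weak convergence gives tightness: for $\delta>0$ choose $M$ with $\pp(|X_n|>M)<\delta$ for all $n$. Then
$$\pp(|X_n(Y_n-c)|>\varepsilon)\le \delta+\pp(|Y_n-c|>\varepsilon/M),$$
and the last probability tends to $0$. With this in hand, applying (i) to the pair $cX_n$ and $X_n(Y_n-c)$ gives (ii).

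\emph{Part (iii).} For $c\neq0$, the map $y\mapsto 1/y$ is continuous at $c$, so $1/Y_n\stackrel{P}{\longrightarrow}1/c$; on the event $\{Y_n=0\}$, whose probability tends to $0$, define the ratio arbitrarily. Then (ii) with $1/Y_n$ in place of $Y_n$ yields (iii).

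The only step with real content is the tightness argument in (ii). It rests on the standard fact that a weakly convergent sequence is tight: choose $M$ to be a continuity point of $|X|$ with $\pp(|X|>M)<\delta/2$, and then $\pp(|X_n|>M)<\delta$ for all large $n$; the finitely many remaining $n$ are handled by enlarging $M$.
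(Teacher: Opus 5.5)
Your proof is correct. The bounded-Lipschitz splitting in (i) is sound. So is the decomposition $X_nY_n=cX_n+X_n(Y_n-c)$ in (ii), together with the tightness bound $\pp(|X_n(Y_n-c)|>\varepsilon)\le\delta+\pp(|Y_n-c|>\varepsilon/M)$. The reduction of (iii) to (ii) through $1/Y_n\stackrel{P}{\longrightarrow}1/c$ also goes through as written.

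The paper does not prove this lemma; it quotes it from the reference cited in the statement, so there is no argument in the paper to compare yours against. Two other standard routes exist:
\begin{enumerate}
\item The classical proof of (i) works with distribution functions. It sandwiches $\pp(X_n+Y_n\le t)$ between $\pp(X_n\le t-c-\varepsilon)-\pp(|Y_n-c|\ge\varepsilon)$ and $\pp(X_n\le t-c+\varepsilon)+\pp(|Y_n-c|\ge\varepsilon)$ at continuity points, then lets $\varepsilon\downarrow 0$.
\item Alternatively, one establishes the joint convergence $(X_n,Y_n)\stackrel{d}{\longrightarrow}(X,c)$ and applies the continuous mapping theorem to $(x,y)\mapsto x+y$, $xy$, $x/y$. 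The last map is continuous off $\{y=0\}$, which the limit law does not charge when $c\neq 0$. This handles all three parts at once, at the cost of proving joint convergence and invoking the bivariate continuous mapping theorem.
\end{enumerate}
Your version needs only the univariate portmanteau theorem plus tightness. It also makes explicit the one point with real content, namely the negligibility of $X_n(Y_n-c)$.

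The paper only ever uses part (i) with $c=0$, in Step 2 of the proof of Theorem~\ref{thm2-1}. There it transfers the Gaussian limit of $D_{n+1}$ to $X_{n+1}=D_{n+1}+\Delta_{n+1}$ using $\Delta_{n+1}\stackrel{P}{\longrightarrow}0$. Your argument for (i) alone already covers that use.
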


\begin{proof}[{\bf Proof of Theorem \ref{thm2-1}}]
From (\ref{Z-1}) and (\ref{f}),
$$
\aligned
Z_{n+1}-\frac{1}{2}&=Z_n-\frac{1}{2}+\frac{1}{T_{n+1}}\left(f(Z_n)+\Delta M_{n+1}\right)\\
&=Z_n-\frac{1}{2}+\frac{-2b p}{T_{n+1}}\left(Z_n-\frac{1}{2}\right)+\frac{\Delta M_{n+1}}{T_{n+1}}\\
&=\left(Z_n-\frac{1}{2}\right)\left(1-\frac{2b p}{T_{n+1}}\right)+\frac{\Delta M_{n+1}}{T_{n+1}}.
\endaligned
$$
Hence we have
$$
\sqrt{n+1}\left(Z_{n+1}-\frac{1}{2}\right)=\sqrt{n+1}\left(Z_n-\frac{1}{2}\right)\left(1-\frac{2b p}{T_{n+1}}\right)+\frac{\sqrt{n+1}\Delta M_{n+1}}{T_{n+1}}.
$$
Defined that
$$
X_{n+1}:=\sqrt{n+1}\left(Z_{n+1}-\frac{1}{2}\right),\ \ \Gamma_{n+1}:=\frac{2b p(n+1)}{T_{n+1}}\ \,\ \ \text{and}\ \ V_{n+1}:=\frac{(n+1)\Delta M_{n+1}}{T_{n+1}},
$$
it is not difficult to get that
$$
X_{n+1}=\sqrt{1+\frac{1}{n}}\left(1-\frac{\Gamma_{n+1}}{n+1}\right)X_n+\frac{V_{n+1}}{\sqrt{n+1}}.
$$
Through Taylor expansion, we obtain
$$
\sqrt{1+\frac{1}{n}}=1+\frac{1}{2(n+1)}+O\left(\frac{1}{(n+1)^2}\right).
$$
Thus,
\beq\label{X-1}
X_{n+1}=\left(1-\frac{\Gamma_{n+1}-\frac{1}{2}+O\left(\frac{1}{n+1}\right)}{n+1}\right)X_n+\frac{V_{n+1}}{\sqrt{n+1}}.
\deq
From Theorem \ref{thm1},
\beq\label{V}
V_{n+1}^2=\left(\frac{(n+1)\Delta M_{n+1}}{T_{n+1}}\right)^2\le \frac{(2a+3b+2c)^2}{(\min\{a+b,c\})^2}:=C_V.
\deq
From Theorem \ref{thm2}, we can get that
\beq\label{T4}
\Gamma_{n+1}-\frac{1}{2}+O\left(\frac{1}{n+1}\right)\rightarrow \frac{2b p}{(a+b)p+c(1-p)}-\frac{1}{2}:=\Gamma-\frac{1}{2}>0
\deq
and
\beq\label{V-1}
\ee\left[V_{n+1}^2|\FF_n\right]=\ee\left[\left(\frac{n+1}{T_{n+1}} \Delta M_{n+1}\right)^2\Bigg|\FF_{n} \right]\rightarrow \frac{p(a-b)^2+c^2(1-p)}{4\big((a+b)p+c(1-p)\big)^2}:=\sigma^2\ \ a.s.
\deq
Therefore, to prove (\ref{CLT}), it suffices to prove
\beq\label{p-1}
X_{n+1} \stackrel{\dd}{\longrightarrow} \nn\left(0,\frac{\sigma^2}{2\Gamma-1}\right).
\deq
We will prove this in two steps.

{\bf Step 1:} Assuming that  $\Gamma_{n+1}-\frac{1}{2}+O\left(\frac{1}{n+1}\right)\equiv \Gamma-\frac{1}{2}$ and $\ee\left[V_{n+1}|\FF_n\right]=0$,  we first prove that (\ref{p-1}) holds under these conditions.

{\bf Step 2:} We then demonstrate that these assumptions can be removed without affecting the validity of the result, completing the proof.

Below, we provide a detailed proof of these two steps.

{\bf Step 1. Detailed Proof:} In this step we work under the following simplifying assumptions (the general case will be handled by error estimates in Step 2):
$$ \Gamma_{n+1}-\frac{1}{2}+O\left(\frac{1}{n+1}\right)\equiv \Gamma-\frac{1}{2}>0,\ \ \ \ \ \ee\left[V_{n+1}|\FF_n\right]=0.
$$
Define
$$
\varphi_n(t):=\ee e^{it X_n}, \ \ B_n:=1-\frac{\Gamma-\frac{1}{2}}{n+1}.
$$
From (\ref{X-1}), we obtain the exact recursion
\beq\label{X}
X_{n+1}=B_n X_n+\frac{V_{n+1}}{\sqrt{n+1}}.
\deq
To prove (\ref{p-1}), we construct an auxiliary sequence $\left(\beta_n(t)\right)_{n\ge 1}$ as follows:
\beq\label{xi}
\beta_1(t)=1,\ \ \beta_{n+1}(t):=\beta_n\left(B_nt\right)\left(1-\frac{t^2 \sigma^2}{2(n+1)}\right).
\deq
It can be seen that $\left(\beta_n(t)\right)_{n\ge 1}$  is precisely the characteristic function of $(X_{n})_{n\ge 1}$ when $\left(V_n\right)_{n\ge 1}$ is a sequence of independent and identically distributed random variables with distribution $\nn(0,\sigma^2)$. Then, by Corollary \ref{lem-1},  in this normal case we have $\ee X_n^2 \rightarrow \frac{\sigma^2}{2\Gamma-1}$. Since each
$X_n$ is normal with mean $\mu_n=\ee X_0 \prod_{k=0}^{n-1} B_k$,  its characteristic function is $\beta_{n}(t)=e^{i\mu_n t-\frac{1}{2}t^2\ee X_n^2}$. Note that $\mu_n\sim \ee X_0 n^{-\left(\Gamma-\frac{1}{2}\right)}\rightarrow 0$ and $\ee X_n^2 \rightarrow \frac{\sigma^2}{2\Gamma-1}$,
we obtain that for all $t$,
$$
\beta_{n}(t)\rightarrow e^{-\frac{1}{2}t^2 \frac{\sigma^2}{2\Gamma-1}}.
$$
If we can prove that
\beq\label{p-2}
\left|\varphi_n(t)- \beta_{n}(t)\right|\rightarrow 0,\ \ \text{as}\ \ n\rightarrow\infty,\ \ \text{for all}\ \ t,
\deq
then we obtain
$$
\varphi_n(t)\rightarrow e^{-\frac{1}{2}t^2 \frac{\sigma^2}{2\Gamma-1}}.
$$
By L\'{e}vy's continuity theorem, this convergence of characteristic functions implies
$$
X_{n+1} \stackrel{\dd}{\longrightarrow} \nn\left(0,\frac{\sigma^2}{2\Gamma-1}\right).
$$
Next, we prove that (\ref{p-2}) holds. From (\ref{X}) and (\ref{xi}) we have
\begin{align}\label{p-3}
\left|\varphi_{n+1}(t)-\beta_{n+1}(t)\right|&=\left|\ee e^{it X_{n+1}}-\beta_n\left(B_nt\right)\left(1-\frac{t^2 \sigma^2}{2(n+1)}\right)\right|\nonumber\\
&=\left|\ee e^{it B_n X_{n}+it\frac{V_{n+1}}{\sqrt{n+1}}}-\beta_n\left(B_nt\right)\left(1-\frac{t^2 \sigma^2}{2(n+1)}\right)\right|\nonumber\\
&=\left|\ee \left[\left[e^{it B_n X_{n}}-\beta_n\left(B_nt\right)\right]\left(1-\frac{t^2 \sigma^2}{2(n+1)}\right)\right.\right.\nonumber\\
&\left. \left. \ \ \ \ \ \ +e^{it B_n X_n}\left(e^{it\frac{V_{n+1}}{\sqrt{n+1}}}-1+\frac{t^2 \sigma^2}{2(n+1)}\right)\right] \right|\nonumber\\
&\le \left|1-\frac{t^2 \sigma^2}{2(n+1)}\right| \left|\varphi_{n}\left(B_nt\right)-\beta_n\left(B_nt\right)\right|\nonumber\\
&\ \ \ \ \ +\ee \left|e^{it B_n X_n} \ee\left[e^{it\frac{V_{n+1}}{\sqrt{n+1}}}-1+\frac{t^2 \sigma^2}{2(n+1)}\Big| \FF_n\right]\right|\nonumber\\
&\le \left|1-\frac{t^2 \sigma^2}{2(n+1)}\right| \left|\varphi_{n}\left(B_nt\right)-\beta_n\left(B_nt\right)\right|\nonumber\\
&\ \ \ \ \ +\ee \left| \ee\left[e^{it\frac{V_{n+1}}{\sqrt{n+1}}}-1+\frac{t^2 \sigma^2}{2(n+1)}\Big| \FF_n\right]\right|\nonumber\\
&:=\left|1-\frac{t^2 \sigma^2}{2(n+1)}\right| \big|\varphi_{n}\left(B_nt\right)-\beta_n\left(B_nt\right)\big|+\alpha_n(t),
\end{align}
where
$$
\alpha_n(t):=\ee \left| \ee\left[e^{it\frac{V_{n+1}}{\sqrt{n+1}}}-1+\frac{t^2 \sigma^2}{2(n+1)}\Big| \FF_n\right]\right|.
$$
To show that $\left|\varphi_{n+1}(t)-\beta_{n+1}(t)\right|$ tends to zero for any $t$, it suffices to fix an arbitrary  $T>0$ and consider $|t|\le T$.
By applying Lemma \ref{lem-2} with $n=2$, and from (\ref{V}), for any $\varepsilon >0$, we obtain
\begin{align}\label{p-4}
\alpha_n(t)&=\ee \left| \ee\left[e^{it\frac{V_{n+1}}{\sqrt{n+1}}}-1+\frac{t^2 \sigma^2}{2(n+1)}\Big| \FF_n\right]+\frac{t^2 }{2(n+1)}\ee\left[V_{n+1}^2\Big| \FF_n\right]-\frac{t^2 }{2(n+1)}\ee\left[V_{n+1}^2\Big| \FF_n\right]\right|\nonumber\\
&\le \ee \left| \ee\left[e^{it\frac{V_{n+1}}{\sqrt{n+1}}}\Big| \FF_n\right]-1+\frac{t^2 }{2(n+1)}\ee\left[V_{n+1}^2\Big| \FF_n\right]\right|+\frac{t^2 }{2(n+1)} \ee \left|\sigma^2-\ee\left[ V_{n+1}^2\Big| \FF_n\right]\right|\nonumber\\
&\le \ee \min\left\{\frac{|tV_{n+1}|^2}{n+1}, \frac{|tV_{n+1}|^3}{6(n+1)^{\frac{3}{2}}}\right\}+\frac{t^2 }{2(n+1)} \ee \left|\sigma^2-\ee\left[ V_{n+1}^2\Big| \FF_n\right]\right|\nonumber\\
&\le \ee\left[\frac{|tV_{n+1}|^2}{n+1} \ii_{\left\{V_{n+1}^2\ge \varepsilon (n+1)\right\}}\right]+\ee\left[\frac{|tV_{n+1}|^3}{6(n+1)^{\frac{3}{2}}} \ii_{\left\{V_{n+1}^2< \varepsilon(n+1)\right\}}\right]\nonumber\\
&\ \ \ \ \ \  +\frac{t^2 }{2(n+1)} \ee \left|\sigma^2-\ee\left[ V_{n+1}^2\Big| \FF_n\right]\right|\nonumber\\
&\le t^2\frac{\ee\left[\left|V_{n+1}\ii_{\left\{V_{n+1}^2\ge \varepsilon (n+1)\right\}}\right|^2\right]}{n+1} +|t|^3\frac{\ee\left[|V_{n+1}|C_V\ii_{\left\{V_{n+1}^2< \varepsilon(n+1)\right\}}\right]}{6(n+1)^{\frac{3}{2}}}\nonumber\\
&\ \ \ \ \ \  +\frac{t^2 }{2(n+1)} \ee \left|\sigma^2-\ee\left[ V_{n+1}^2\Big| \FF_n\right]\right|\nonumber\\
&\le t^2\frac{\ee\left[\left|V_{n+1}\ii_{\left\{V_{n+1}^2\ge \varepsilon (n+1)\right\}}\right|^2\right]}{n+1} +|t|^3\frac{\sqrt{\varepsilon}C_V}{6(n+1)}+\frac{t^2 }{2(n+1)} \ee \left|\sigma^2-\ee\left[ V_{n+1}^2\Big| \FF_n\right]\right|\nonumber\\
&:=|t| \frac{g_n(t)}{n+1},
\end{align}
where
$$
g_n(t):=|t|\ee\left[\left|V_{n+1}\ii_{\left\{V_{n+1}^2\ge \varepsilon (n+1)\right\}}\right|^2\right]+t^2 \frac{\sqrt{\varepsilon}C_V}{6}+|t| \ee \left|\sigma^2-\ee\left[ V_{n+1}^2\Big| \FF_n\right]\right|.
$$
From (\ref{V}), (\ref{V-1}) and dominated convergence theorem, for any fixed $t$, we have
\beq\label{g}
\lim_{n\rightarrow\infty} g_n(t)=t^2 \frac{\sqrt{\varepsilon}C_V}{6}.
\deq

Let
$$
k_n(t):=\left|\varphi_{n}(t)-\beta_{n}(t)\right|.
$$
From (\ref{p-3}) and (\ref{p-4}), we can conclude that
\beq\label{k}
k_{n+1}(t)\le \left|1-\frac{t^2 \sigma^2}{2(n+1)}\right| k_n(B_nt)+ |t| \frac{g_n(t)}{n+1}.
\deq
Below, we will primarily prove for any $|t|\le T$, $\displaystyle\lim_{n\rightarrow\infty} k_n(t)=0$. We can choose a sufficiently large $N_0$ such that for all $n\ge N_0$, $ B_n=1-\frac{\Gamma-\frac{1}{2}}{n+1}\in (0,1)$ and note that $g_n(t)$ is increasing in $|t|$, we can get that
\beq\label{k1}
k_{n+1}(t)\le  k_n(B_nt)+ |t| \frac{g_n(T)}{n+1}.
\deq
Note that
$$
k_1(t)=\left|\varphi_{1}(t)-\beta_{1}(t)\right|=\left|\ee e^{it X_1}-1\right|,
$$
by applying Lemma \ref{lem-2} with $n=0$, we can get that
$$
k_1(t)\le \ee \min\{2, |t X_1|\}\le |t| \ee|X_1|.
$$
Hence we can get that
$$
k_1(t)=O(|t|) \ \ \text{as} \ \ t\rightarrow0.
$$
From (\ref{k1}) we can get that for all $n\ge N_0$,
$$
k_n(t)=O(|t|)\ \ \ \text{as}\ \ \ t\rightarrow0.
$$
 For the initial index $N_0$, define
$$
\delta_{N_0}(T):=\sup_{-T\le t\le T} \frac{k_{N_0}(t)}{|t|},
$$
then we know that $\delta_{N_0}(T)$ is finite. For any $|t|\le T$, note that $B_{N_0}\in (0,1)$, we have
\beq\label{k0}
k_{N_0}(t)\le |t|\delta_{N_0}(T)\ \ \text{and}\ \ k_{N_0}\left(B_{N_0}t\right)\le B_{N_0}|t|\delta_{N_0}(T).
\deq
For all $j\ge N_0$, define $\delta_{j+1}(T)$ recursively by
$$
\delta_{j+1}(T):=B_j \delta_j(T)+\frac{g_j(T)}{j+1}=\left(1-\frac{\Gamma-\frac{1}{2}}{j+1}\right)\delta_j(T)+\frac{g_j(T)}{j+1}.
$$
From (\ref{g}) and Lemma \ref{cor-1}, we can get that
\beq\label{d-1}
\lim_{j\rightarrow\infty} \delta_{j+1}(T)=T^2 \frac{\sqrt{\varepsilon}C_V}{6\Gamma-3}.
\deq
Below we employ the method of inductive hypothesis. If we assume that (\ref{k0}) holds for $j$ in place of $N_0$, i.e. we assume that
$$
k_j(t)\le |t| \delta_{j} (T) \ \ \text{and}\ \ k_{j}\left(B_{j}t\right)\le B_{j}|t|\delta_{j}(T)
$$
holds. Together with (\ref{k1}), we have
$$
k_{j+1}(t)\le  k_j\left(B_j t\right)+|t|\frac{g_j(T)}{j+1}\le|t|B_j \delta_j(T)+|t|\frac{g_j(T)}{j+1}=|t|\delta_{j+1}(T).
$$
Therefore, we can conclude that the assumption holds for $j+1$. As a consequence, since $B_{j+1}\in (0,1)$, we can also get that
$$
k_{j+1}\left(B_{j+1}t\right)\le B_{j+1}|t|\delta_{j+1}(T).
$$
Hence we can get that for all $|t|\le T$ and $j\ge N_0$,
$$
k_{j}(t)\le |t|\delta_{j}(T).
$$
Together with (\ref{d-1}) and considering that $\varepsilon$ can take arbitrarily small values, we obtain that
$$
\lim_{j\rightarrow\infty} k_{j}(t)=0.
$$
Hence we know that
$$
\varphi_n(t)\rightarrow \beta_{n}(t),\ \ \text{as}\ \ n\rightarrow\infty,\ \ \text{for all}\ \ t.
$$

{\bf Step 2. Detailed Proof:} We now drop the two simplifying assumptions made in Step 1 and return to the original recursion (\ref{X-1}).
Let
\beq\label{D}
D_{n+1}:=\left(1-\frac{\Gamma-\frac{1}{2}}{n+1}\right) D_n+\frac{V_{n+1}-\ee \left[V_{n+1}\big|\FF_n\right]}{\sqrt{n+1}},
\deq
then $D_n$ satisfies the assumptions of Step 1 and from Step 1 we have
$$
D_{n+1} \stackrel{\dd}{\longrightarrow} \nn\left(0,\frac{\sigma^2}{2\Gamma-1}\right).
$$
To prove that (\ref{p-1}) still holds, we construct the process $\left(D_n\right)_{n\ge 1}$ that satisfies the  Step 1  conditions and show that the difference $\Delta_n:= X_n-D_n$ converges to zero in probability.
From (\ref{X-1}) we have
$$
\aligned
\Delta_{n+1}:&=X_{n+1}-D_{n+1}\\
&=\left(1-\frac{\Gamma_{n+1}-\frac{1}{2}+O\left(\frac{1}{n+1}\right)}{n+1}\right)X_n-\left(1-\frac{\Gamma-\frac{1}{2}}{n+1}\right) D_n+\frac{\ee \left[V_{n+1}\big|\FF_n\right]}{\sqrt{n+1}}\\
&=\left(1-\frac{\Gamma_{n+1}-\frac{1}{2}+O\left(\frac{1}{n+1}\right)}{n+1}\right)\Delta_n+\frac{\Gamma-\Gamma_{n+1}+O\left(\frac{1}{n+1}\right)}{n+1}D_n+\frac{\ee \left[V_{n+1}\big|\FF_n\right]}{\sqrt{n+1}}.
\endaligned
$$
If we can prove $\Delta_{n+1}\stackrel{P}{\longrightarrow} 0$, then by Lemma \ref{lem-3}, we can conclude that
$$
X_{n+1} \stackrel{\dd}{\longrightarrow} \nn\left(0,\frac{\sigma^2}{2\Gamma-1}\right).
$$
holds. Below, we will focus on proving $\Delta_{n+1}\stackrel{L_1}{\longrightarrow} 0$, which will imply $\Delta_{n+1}\stackrel{P}{\longrightarrow} 0$.
From (\ref{D}) we know that
\beq\label{D-1}
\aligned
D_{n+1}^2&=\left(1-\frac{\Gamma-\frac{1}{2}}{n+1}\right)^2 D_n^2+\frac{\left(V_{n+1}-\ee \left[V_{n+1}\big|\FF_n\right]\right)^2}{n+1}\\
&\ \ \ \ \ \ \ \ \ \ \ \ \ +2\left(1-\frac{\Gamma-\frac{1}{2}}{n+1}\right) D_n\frac{V_{n+1}-\ee \left[V_{n+1}\big|\FF_n\right]}{\sqrt{n+1}}.
\endaligned
\deq
Note that
$$
\aligned
\ee\left[D_n\left(V_{n+1}-\ee \left[V_{n+1}\big|\FF_n\right]\right)\right]&=\ee\left[\ee\left[D_n\left(V_{n+1}-\ee \left[V_{n+1}\big|\FF_n\right]\right)\Big|\FF_n\right]\right]\\
&=\ee\left[D_n\left(\ee\left[V_{n+1}\big|\FF_n\right]-\ee \left[V_{n+1}\big|\FF_n\right]\right)\right]=0.
\endaligned
$$
Therefore, taking the expected value of both sides of (\ref{D-1}) yields
$$
\aligned
\ee D_{n+1}^2&=\left(1-\frac{\Gamma-\frac{1}{2}}{n+1}\right)^2 \ee D_n^2+\frac{\ee \left[\left(V_{n+1}-\ee \left[V_{n+1}\big|\FF_n\right]\right)^2\right]}{n+1}\\
&=\left(1-\frac{2 \Gamma-1-\frac{\left(\Gamma-\frac{1}{2}\right)^2}{n+1}}{n+1}\right) \ee D_n^2+\frac{\ee \left[ \left(V_{n+1}-\ee \left[V_{n+1}\big|\FF_n\right]\right)^2\right]}{n+1}.
\endaligned
$$
From (\ref{V}) we know that $|V_{n+1}|\le \sqrt{C_V}$. Construct a sequence $\{y_n\}_{n\ge 1}$ such that $y_1\ge \ee D_{1}^2$ and $y_n$ satisfies
$$
y_{n+1}=\left(1-\frac{2 \Gamma-1-\frac{\left(\Gamma-\frac{1}{2}\right)^2}{n+1}}{n+1}\right) y_n+\frac{4 C_V}{n+1}.
$$
Below we employ the method of inductive hypothesis. Suppose that $y_k\ge \ee D_k^2$. Then we have
$$
\aligned
\ee D_{k+1}^2&=\left(1-\frac{2 \Gamma-1-\frac{\left(\Gamma-\frac{1}{2}\right)^2}{k+1}}{k+1}\right) \ee D_k^2+\frac{\ee \left(V_{k+1}-\ee \left[V_{k+1}\big|\FF_k\right]\right)^2}{k+1}\\
&\le \left(1-\frac{2 \Gamma-1-\frac{\left(\Gamma-\frac{1}{2}\right)^2}{k+1}}{k+1}\right) y_k +\frac{4 C_V}{k+1}=y_{k+1}.
\endaligned
$$
Hence we can get that for all $n\ge 1, y_n\ge \ee D_n^2$. From Lemma \ref{cor-1} we can get that
$$
\lim_{n\rightarrow\infty} y_n=\frac{4 C_V}{2 \Gamma-1},
$$
then we have
$$
\limsup_{n\rightarrow\infty} \ee D_{n+1}^2\le\limsup_{n\rightarrow\infty} y_{n+1} = \lim_{n\rightarrow\infty} y_{n+1}=\frac{4 C_V}{2 \Gamma-1}.
$$
From Cauchy-Schwarz inequality we know
\beq\label{D-3}
\limsup_{n\rightarrow\infty} \ee \left|D_{n+1}\right|\le \sqrt{\frac{4 C_V}{2 \Gamma-1}}.
\deq
From (\ref{V-2}) we know that
$$
\left|\ee \left[V_{n+1}\big|\FF_n\right]\right|=\left|\ee\left[\frac{(n+1)\Delta M_{n+1}}{T_{n+1}}\Big|\FF_n\right]\right|\le \frac{2K}{n},
$$
hence we have
$$
\frac{\left|\ee \left[V_{n+1}\big|\FF_n\right]\right|}{\sqrt{n+1}}= o\left(\frac{1}{n+1}\right)\ \ \ \ a.s.
$$
From (\ref{T4}) we know that there exist a positive constant $N_1$ such that for all $n\ge N_1$,
$$
\left|\Gamma_{n+1}-\frac{1}{2}+O\left(\frac{1}{n+1}\right)-\left(\Gamma-\frac{1}{2}\right)\right|\le \varepsilon
$$
and
$$
\left|\Gamma-\Gamma_{n+1}+O\left(\frac{1}{n+1}\right)\right|\le \varepsilon,
$$
here $\varepsilon$ is a positive constant small enough. Then for all $n\ge N_1$, we can get that
\beq\label{D-2}
\left|\Delta_{n+1}\right|\le \left(1-\frac{\Gamma-\frac{1}{2}-\varepsilon}{n+1}\right)\left|\Delta_{n}\right|+\frac{\varepsilon}{n+1}\left|D_n\right|+o\left(\frac{1}{n+1}\right).
\deq
Taking the expected value of both sides of (\ref{D-2}) yields and from (\ref{D-3}) we have
$$
\aligned
\ee \left|\Delta_{n+1}\right|&\le \left(1-\frac{\Gamma-\frac{1}{2}-\varepsilon}{n+1}\right)\ee \left|\Delta_{n}\right|+\frac{\varepsilon}{n+1}\ee \left|D_n\right|+o\left(\frac{1}{n+1}\right)\\
&\le \left(1-\frac{\Gamma-\frac{1}{2}-\varepsilon}{n+1}\right)\ee \left|\Delta_{n}\right|+\frac{\varepsilon \sqrt{\frac{4 C_V}{2 \Gamma-1}}+o(1)}{n+1}.
\endaligned
$$
Construct a sequence $\{x_n\}_{n\ge 1}$ such that $x_1\ge \ee \left|\Delta_{1}\right|$ and $x_n$ satisfies
$$
x_{n+1}=\left(1-\frac{\Gamma-\frac{1}{2}-\varepsilon}{n+1}\right) x_n+\frac{\varepsilon \sqrt{\frac{4 C_V}{2 \Gamma-1}}+o(1)}{n+1}.
$$
According to the inductive hypothesis method, we can conclude that for all $n\ge 1$, $\ee \left|\Delta_{n}\right|\le x_n$ holds true. From Lemma \ref{cor-1} we can get that
$$
\lim_{n\rightarrow\infty} x_n=\frac{2 \varepsilon \sqrt{C_V}}{\sqrt{2 \Gamma-1}\left(\Gamma-\frac{1}{2}-\varepsilon\right)}.
$$
Hence we have
$$
\limsup_{n\rightarrow\infty} \ee\left|\Delta_{n}\right|\le \lim_{n\rightarrow\infty} x_n=\frac{2 \varepsilon \sqrt{C_V}}{\sqrt{2 \Gamma-1}\left(\Gamma-\frac{1}{2}-\varepsilon\right)}.
$$
Note that $\varepsilon$ small enough, taking $\varepsilon\rightarrow 0$, we can get that
$$
\ee\left|\Delta_{n}\right|\rightarrow 0,
$$
which makes $\Delta_{n}\stackrel{P}{\longrightarrow} 0$.
\end{proof}

\end{document}